\subjclass[2010] {11F41}
\keywords{}
\author{Benjamin Linowitz}
\address{University of Michigan \\ Department of Mathematics \\ 530 Church Street \\ Ann Arbor, MI 48109 \\ United States}
\email{linowitz@umich.edu}
\author{Lola Thompson}
\address{University of Georgia\\ Department of Mathematics \\ Boyd Graduate Research Center \\ Athens, GA 30601 \\ United States}
\email{lola@math.uga.edu}
\thanks{Benjamin Linowitz was partially supported by NSF RTG grant DMS-1045119 and Lola Thompson was partially supported by NSF VIGRE grant DMS-0738586.}
\title{The sign changes of Fourier coefficients of Eisenstein series}
\DeclareMathAlphabet{\curly}{U}{rsfs}{m}{n}
\DeclareMathOperator{\Aut}{Aut}
\DeclareMathOperator{\Gal}{Gal}
\DeclareMathOperator{\cond}{cond}
\DeclareMathOperator{\Tr}{Tr}
\newtheorem{thm}{Theorem}[section]
\newtheorem{prop}[thm]{Proposition}
\newtheorem{corollary}[thm]{Corollary}
\newtheorem{conj}[thm]{Conjecture}
\newtheorem{lemma}[thm]{Lemma}
\theoremstyle{remark}
\theoremstyle{definition}
\newtheorem*{rmk}{Remark}
\begin{document}
\def\phi{\varphi}

\renewcommand{\labelenumi}{(\roman{enumi})}
\def\polhk#1{\setbox0=\hbox{#1}{\ooalign{\hidewidth
    \lower1.5ex\hbox{`}\hidewidth\crcr\unhbox0}}}
\newcommand{\del}{\ensuremath{\delta}}
\def\A{\curly{A}}
\def\B{\curly{B}}
\def\e{\mathrm{e}}
\def\E{\curly{E}}
\def\F{\mathbf{F}}
\def\C{\mathbf{C}}
\def\I{\curly{I}}
\def\N{\mathbf{N}}
\def\D{\curly{D}}
\def\Q{\mathbf{Q}}
\def\O{\curly{O}}
\def\V{\curly{V}}
\def\W{\curly{W}}
\def\Z{\mathbf{Z}}
\def\p{\tilde{p}}
\def\Pp{\curly{P}}
\def\pr{\mathfrak{p}}
\def\Proj{\mathbf{P}}
\def\q{\mathfrak{q}}
\def\Ss{\curly{S}}
\def\T{\curly{T}}
\def\Nm{\mathcal{N}}
\def\cont{\mathrm{cont}}
\def\ord{\mathrm{ord}}
\def\rad{\mathrm{rad}}
\def\lcm{\mathop{\mathrm{lcm}}}

\newcommand{\lp}{\ensuremath{\left(}}
\newcommand{\rp}{\ensuremath{\right)}}

\numberwithin{equation}{section}
\begin{abstract} 

In this paper we prove a number of theorems that determine the extent to which the signs of the Hecke eigenvalues of an Eisenstein newform determine the newform. We address this problem broadly and provide theorems of both individual and statistical nature. Many of these results are Eisenstein series analogues of well-known theorems for cusp forms. For instance, we determine how often the $p^{th}$ Fourier coefficients of an Eisenstein newform begin with a fixed sequence of signs $\varepsilon_p = \{\pm 1, 0\}$. Moreover, we prove the following variant of the strong multiplicity-one theorem: an Eisenstein newform is uniquely determined by the signs of its Hecke eigenvalues with respect to any set of primes with density greater than $1/2$.

\end{abstract}
\maketitle

\section{Introduction}

Many results in the theory of classical elliptic modular forms are concerned with studying the extent to which a modular form is determined by its Fourier coefficients. In this paper we focus our attention on the signs of the Fourier coefficients of Eisenstein series with real coefficients and, in particular, those that are Eisenstein newforms. This is equivalent to studying the Hecke eigenvalues of the newforms in question, as it is well-known that the $p$-th Hecke eigenvalue of an Eisenstein newform is equal to its $p$-th Fourier coefficient. These Fourier coefficients are in turn given by the values of a variant of the sum-of-divisors function, $\sigma(n):=\sum_{d\mid n} d$. The variant that we consider introduces signed terms, weighting each divisor of $n$ by the values of certain Dirichlet characters. We exploit properties of Dirichlet characters and the sum-of-divisors function in order to obtain generalizations of several well-known theorems for cusp forms. Owing to the explicit nature of Eisenstein series, many of our results are ``best possible''.  We now develop the notation necessary to describe our results in greater detail.

For a positive integer $N$, Dirichlet character $\chi$ and integer $k\geq 2$, let $M_k(N,\chi)$ denote the complex vector space of modular forms on $\Gamma_0(N)$ of weight $k$ and character $\chi$. Let $E_k(N,\chi)$ (respectively $S_k(N,\chi)$) denote the subspace of Eisenstein series (respectively cusp forms). For any prime $p$, let $T_p$ denote the $p$th Hecke operator. It is well-known that $S_k(N,\chi)$ has a basis consisting of newforms, which are simultaneous eigenforms for the algebra generated by $\{T_p : (p,N)=1\}$, and their shifts by divisors of $NM^{-1}$ \cite{atkin-lehner, li}. The strong multiplicity-one theorem shows that these newforms are uniquely determined by their eigenvalues for all but finitely many of the operators $\{ T_p : (p,N)=1\}$. Ramakrishnan \cite{Ramakrishnan-multiplictyone} obtained an even stronger multiplicity-one theorem, showing that newforms are uniquely determined by their eigenvalues with respect to the $p$th Hecke operator for any set of primes with asymptotic density greater than $7/8$.

Let $f\in S_k(N,\chi)$ be a newform, $\lambda_f(p)$ the eigenvalue of $f$ with respect to the Hecke operator $T_p$ and assume that all of the eigenvalues $\{\lambda_f(p)\}$ are real. The sequence of {\em signs} of the Hecke eigenvalues of $f$ have been studied by a number of authors \cite{kohnen2006,kohnen2007,kohnen2008,kohnen2009,kowalskietal,matomaki}. It follows from a classical theorem of Landau and an analysis of the Rankin-Selberg zeta function of $f$ that there are infinitely many primes for which $\lambda_f(p)>0$ and infinitely many for which $\lambda_f(p)<0$ \cite[page 173]{kohnen2006}. In analogy with the problem of determining the least quadratic non-residue, one may consider the problem of determining the smallest prime $p$ for which $\lambda_f(p)<0$ (or, more generally, the least integer $n$ coprime to $N$ for which the eigenvalue $\lambda_f(n)$ of $f$ with respect to the Hecke operator $T_n$ is negative). The strongest result in this direction is due to Matom{\"a}ki \cite[Theorem 1]{matomaki}, who has shown that $\lambda_f(n)<0$ for some integer $n\ll (k^2N)^{3/8}$. In a different direction, Kowalski, Lau, Soundararajan and Wu \cite[Theorem 3]{kowalskietal} considered an arbitrary sequence of signs $\{ \varepsilon_p\}$ and obtained a lower bound for the proportion of newforms of $S_k(N,\chi)$ whose eigenvalue sequence has signs coinciding with $\{ \varepsilon_p\}$ for all primes $p\leq x$.

In his thesis \cite{weisinger-thesis}, Weisinger developed a newform theory for the space $E_k(N,\chi)$.  In this paper we consider questions analogous to the ones above for newforms lying in $E_k(N,\chi)$. Let $E\in E_k(N,\chi)$ be an Eisenstein newform whose Hecke eigenvalues $\{ \lambda_E(p)\}$ are all real. We show (Corollary \ref{corollary:densityresult}) that not only is $\lambda_E(p)$ positive for infinitely many primes and negative for infinitely many primes, but in fact we prove that the asymptotic density of the set of positive integers $n$ for which $\lambda_E(n)<0$ is equal to $1/2$. Using a classical result of Burgess, we show (Theorem \ref{thm:smallestsignchange}) that for any fixed $\varepsilon>0$, $\lambda_E(p)<0$ for some prime $p\ll N^{\frac{1}{4\sqrt{e}}+\varepsilon}$. In one of our main results (Theorem \ref{mainanalytic}), we consider a fixed sequence of primes $\{ p_1,\dots,p_\ell\}$ and a fixed sequence of signs $\{ \varepsilon_{p_1},\dots,\varepsilon_{p_\ell} \}$ (where $\varepsilon_{p_i}\in\{-1,0,1\}$) and determine an asymptotic for the number of newforms $E\in E_k(N,\chi)$ with $N\leq x$ for which $\lambda_E(p_i)$ has sign $\varepsilon_{p_i}$ for $i=1,\dots,\ell$. 

In developing a newform theory for $E_k(N,\chi)$, Weisinger proved a strong multiplicity-one theorem in analogy with the classical strong multiplicity-one theorem for cuspidal newforms. This multiplicity-one theorem was later improved upon by Rajan \cite{rajan}, who proved an analogue of Ramakrishnan's refinement of the multiplicity-one theorem. Our Theorem \ref{thm:strongmultone} is a further refinement of the strong multiplicity-one theorem and shows that Eisenstein newforms are uniquely determined by their eigenvalues with respect to the $p$th Hecke operator for any set of primes with density greater than $1/2$. This theorem is in fact best possible; it is easy to exhibit, via quadratic twists, Eisenstein newforms whose Hecke eigenvalues coincide on a set of primes having density equal to $1/2$. In Theorem \ref{thm:quadtwists}, we clarify the extent to which distinct newforms whose Hecke eigenvalues coincide on a set of primes having density equal to $1/2$ must arise from such a quadratic twisting construction. In particular, we show that if $E_1,E_2$ are Eisenstein newforms whose Hecke eigenvalues differ on a set of primes having density $1/2$, then there exists a quadratic character $\theta$ such that for all primes $p$ with $(p,N)=1$, the $p$th Hecke eigenvalues of $E_1$ and $E_2$ differ by $\theta(p)$. We additionally show that a stronger mulitplicity-one result is true: Eisenstein newforms are uniquely determined by the signs of their Hecke eigenvalues with respect to any set of primes with density greater than $1/2$. Here we adopt the convention that the sign of a complex number $z$ is equal to $\frac{z}{|z|}$. This complements a result of Matom{\"a}ki \cite[Theorem 2]{matomaki}, which shows that a cuspidal newform $f$ of trivial character and without complex multiplication is determined by the sign of $\lambda_f(p)$ for any set of primes with analytic density greater than $19/25$.

The final section of this paper considers the problem of studying the signs of the Fourier coefficients of Eisenstein series more broadly. We begin by determining the possible newform decompositions of an Eisenstein series whose Fourier coefficients are all rational numbers (Theorem \ref{thm:rationalcoefficients}). We then show that every Eisenstein series with rational coefficients whose newform decomposition does not include the unique Eisenstein series newform whose Fourier coefficients are all positive must have negative Fourier coefficients of arbitrarily large absolute value (Theorem \ref{thm:nonnegative}). Intuitively, this result shows that one cannot take Eisenstein series newforms having negative Fourier coefficients and cleverly add them together so as to obtain a modular form whose Fourier coefficients are all non-negative. As an immediate application of these results, consider a positive definite integral quadratic form $Q$ in an even number of variables. The theta series associated to $Q$ can be decomposed as the sum of a cusp form and an Eisenstein series and has Fourier coefficients $r_Q(n)$ equal to the number of ways that $n$ is represented by $Q$. These representation numbers are always non-negative. As it is well-known that standard eigenvalue estimates imply that the Fourier coefficients of an Eisenstein series will eventually dominate those of a cusp form, Theorem \ref{thm:nonnegative} provides insight into the possible newform decompositions of the ``Eisenstein part'' of the theta series of $Q$.

\section{Preliminaries}\label{section:prelim}

Let $N_1,N_2$ be positive integers and $\chi_1,\chi_2$ be Dirichlet characters modulo $N_1,N_2$. For a positive integer $k\geq 2$, we define the following variant of the sum-of-divisors function: \begin{equation}\label{equation:sigmadef}\sigma_{\chi_1,\chi_2}^{k-1}(n)=\sum_{d\mid n}\chi_1(n/d)\chi_2(d)d^{k-1}.\end{equation}

Associated to the triple $(\chi_1,\chi_2,k)$ is a function

\begin{equation}
E(\chi_1,\chi_2,k)(z)=\frac{\delta(\chi_1)}{2}L(1-k,\chi_2)+\sum_{n\geq 1}\sigma_{\chi_1, \chi_2}^{k-1}(n)q^n,\qquad q=e^{2\pi i z},
\end{equation}

\noindent where $L(s,\chi_2)$ is the Dirichlet L-function of $\chi_2$ and $\delta(\chi_1)=1$ if $\chi_1$ is principal and is equal to $0$ otherwise. Assume that we are not in the situation that $\chi_1,\chi_2$ are both principal characters modulo $1$ and $k=2$. Then it is well-known that if $\chi_1\chi_2(-1)=(-1)^k$ then $E(\chi_1,\chi_2,k)$ is an Eisenstein series lying in $E_k(N_1N_2,\chi_1\chi_2)$.

In his thesis, Weisinger \cite{weisinger-thesis} developed a newform theory for the space $E_k(N,\chi)$ of Eisenstein series. This theory was analogous to the one developed by Atkin and Lehner \cite{atkin-lehner} for cusp forms and which was later extended by Li \cite{li}. In this theory, the newforms of $E_k(N,\chi)$ are functions $E(\chi_1,\chi_2,k)$ for which $N=N_1N_2$, $\chi=\chi_1\chi_2$ and $\chi_1,\chi_2$ are both primitive. In particular, Weisinger showed that $E_k(N,\chi)$ has a basis consisting of Eisenstein newforms of level $M\mid N$ and their shifts by divisors of $NM^{-1}$. It is easy to check that if $E(\chi_1,\chi_2,k)$ is an Eisenstein newform then it is an eigenform for all of the Hecke operators $T_p$. 
If $p$ is a prime not dividing $N$ then, by explicitly computing the action of the Hecke operator $T_p$ on $E(\chi_1,\chi_2,k)$, one sees that the eigenvalue of $E(\chi_1,\chi_2,k)$ with respect to $T_p$ is equal to $\sigma_{\chi_1,\chi_2}^{k-1}(p)$. Weisinger additionally showed that an Eisenstein newform is uniquely determined by its Hecke eigenvalues in the sense that two newforms whose eigenvalue with respect to the Hecke operator $T_p$ agree for all but finitely many primes $p$ must, in fact, be equal.

As we are interested in studying the signs of the Fourier coefficients of Eisenstein newforms, we note that an immediate consequence of (\ref{equation:sigmadef}) is that $E_k(\chi_1,\chi_2,k)$ has Fourier coefficients lying in the field $\bf R$ of real numbers only if $\chi_1,\chi_2$ are quadratic Dirichlet characters. Therefore, throughout the remainder of this paper all functions $\sigma_{\chi_1,\chi_2}^{k-1}$ will be associated to quadratic Dirichlet characters unless explicitly stated otherwise.

\section{The frequency of negative Fourier coefficients}

In this section, we answer some basic statistical questions about the sign of $\sigma_{\chi_1, \chi_2}^{k-1}$ using techniques from analytic number theory. First, we prove an elementary lemma, which allows us to reformulate our questions about $\sigma_{\chi_1, \chi_2}^{k-1}$ into questions about the behavior of $\chi_2$.  

\begin{lemma} \label{sgnlemma} If $(n, N) = 1$, then the sign of $\sigma_{\chi_1, \chi_2}^{k-1}(n)$ is completely determined by the behavior of $\chi_2(n).$ \end{lemma}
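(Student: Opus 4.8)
The plan is to use the multiplicativity of $\sigma_{\chi_1,\chi_2}^{k-1}$ together with the fact that $\chi_1,\chi_2$ are quadratic characters, so that all the values $\chi_1(a),\chi_2(a)$ lie in $\{-1,0,1\}$, and the exponent $k-1$ is a fixed positive integer. Since $(n,N)=1$ we may write $n=\prod_i p_i^{a_i}$ with each $p_i\nmid N$, and by multiplicativity $\sigma_{\chi_1,\chi_2}^{k-1}(n)=\prod_i \sigma_{\chi_1,\chi_2}^{k-1}(p_i^{a_i})$. Hence it suffices to understand the sign of $\sigma_{\chi_1,\chi_2}^{k-1}(p^a)$ for a prime power $p^a$ with $p\nmid N$.

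First I would compute $\sigma_{\chi_1,\chi_2}^{k-1}(p^a)=\sum_{j=0}^{a}\chi_1(p^{a-j})\chi_2(p^j)p^{j(k-1)} = \sum_{j=0}^{a}\chi_1(p)^{a-j}\chi_2(p)^{j}p^{j(k-1)}$, using complete multiplicativity of Dirichlet characters. Now distinguish cases according to the pair $(\chi_1(p),\chi_2(p))\in\{\pm1\}^2$ (recall $p\nmid N$ forces both values to be nonzero). If $\chi_1(p)=\chi_2(p)=1$, every term is positive and the sum is positive. If $\chi_1(p)=-1,\chi_2(p)=1$, the sum is $\sum_j (-1)^{a-j}p^{j(k-1)}$, whose sign is governed by its largest term $p^{a(k-1)}$ (positive), and one checks the partial sums never overturn this; similarly for $\chi_1(p)=1,\chi_2(p)=-1$. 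The genuinely interesting case is $\chi_1(p)=\chi_2(p)=-1$: then $\sigma_{\chi_1,\chi_2}^{k-1}(p^a)=\sum_{j=0}^a(-1)^{a-j}(-1)^j p^{j(k-1)}=(-1)^a\sum_{j=0}^a p^{j(k-1)}$, which has sign $(-1)^a=\chi_2(p)^a=\chi_2(p^a)$. The key mechanism making all this work is that for $t\ge 2$ a real number, $t^m > t^{m-1}+t^{m-2}+\cdots+1$, so an alternating-type sum of distinct powers of $p^{k-1}\ge 2^{k-1}\ge 2$ is dominated in sign by its top term.

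Carrying out these cases, the upshot is a clean formula: the sign of $\sigma_{\chi_1,\chi_2}^{k-1}(p^a)$ equals $+1$ unless $\chi_1(p)=\chi_2(p)=-1$ and $a$ is odd, in which case it is $-1$; equivalently, the sign equals $1$ if $\chi_2(p)=1$ or $a$ even, and equals $\chi_2(p)^a$ in all cases once one notes that when $\chi_1(p)=1$ the quantity is forced positive. Either way, the sign depends only on $\chi_2(p^a)$ (and the parity data it already encodes), not on $\chi_1$, $k$, or the size of $p$. Multiplying back over the prime-power factors of $n$ then shows the sign of $\sigma_{\chi_1,\chi_2}^{k-1}(n)$ is a function of $\{\chi_2(p^{a_i})\}$, hence of $\chi_2$ restricted to the divisors of $n$, which is what the lemma asserts.

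I expect the main obstacle to be purely bookkeeping: handling the mixed cases $(\chi_1(p),\chi_2(p))=(-1,1)$ and $(1,-1)$ cleanly, since there the sum is not simply $\pm$ a sum of positive terms and one must invoke the geometric-series domination estimate $p^{m(k-1)} > \sum_{j<m} p^{j(k-1)}$ to pin down the sign. Once that estimate is in hand the rest is immediate, so the lemma is genuinely elementary; the role it plays is to reduce all subsequent sign questions about Fourier coefficients to questions about the quadratic character $\chi_2$, e.g.\ to the distribution of primes in arithmetic progressions or to character-sum estimates of Burgess type.
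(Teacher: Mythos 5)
Your route is genuinely different from the paper's and it works. The paper does not factor $n$: it peels off the $d=n$ term, writes $\sigma_{\chi_1,\chi_2}^{k-1}(n)=n^{k-1}\bigl(\chi_2(n)+\sum_{d\mid n,\,d<n}\chi_1(n/d)\chi_2(d)(n/d)^{1-k}\bigr)$, and kills the tail in one stroke by $\sum_{m\geq 2}m^{1-k}=\zeta(k-1)-1<1$, concluding $\mathrm{sgn}\,\sigma_{\chi_1,\chi_2}^{k-1}(n)=\chi_2(n)$ directly. Your argument instead uses multiplicativity of the convolution to reduce to prime powers and dominates each $\sigma_{\chi_1,\chi_2}^{k-1}(p^a)$ by its top term $\chi_2(p)^a p^{a(k-1)}$ via $p^{m(k-1)}>\sum_{j<m}p^{j(k-1)}$. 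The paper's proof is shorter, but its crude bound genuinely needs $k\geq 3$ (for $k=2$ the comparison with $\zeta(1)-1$ says nothing), whereas your local estimate only needs $p^{k-1}\geq 2$ and so covers all $k\geq 2$ uniformly; both arguments yield the same explicit conclusion $\mathrm{sgn}\,\sigma_{\chi_1,\chi_2}^{k-1}(n)=\chi_2(n)$, which is the form actually used later in the paper.

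One slip to fix: your ``clean formula'' is internally inconsistent. The claim that the sign of $\sigma_{\chi_1,\chi_2}^{k-1}(p^a)$ is $+1$ unless $\chi_1(p)=\chi_2(p)=-1$ with $a$ odd, and the parenthetical ``when $\chi_1(p)=1$ the quantity is forced positive,'' are both false: for $\chi_1(p)=1$, $\chi_2(p)=-1$, $a=1$ one has $\sigma_{\chi_1,\chi_2}^{k-1}(p)=1-p^{k-1}<0$. Taken literally, that version would make the sign depend on $\chi_1$, contradicting the lemma. The correct uniform statement---which you also assert and which is exactly what your top-term domination gives in every case---is $\mathrm{sgn}\,\sigma_{\chi_1,\chi_2}^{k-1}(p^a)=\chi_2(p)^a=\chi_2(p^a)$ irrespective of $\chi_1(p)$; multiplying over prime powers then gives $\chi_2(n)$, and the proof is complete once the erroneous alternative summary is deleted.
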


\begin{proof} Peeling off the $d = n$ term in the definition of $\sigma_{\chi_1, \chi_2}^{k-1}(n)$, we have \begin{align} \sigma_{\chi_1, \chi_2}^{k-1}(n) & = \sum_{d \mid n} \chi_1(n/d) \chi_2 (d) d^{k-1} \\ & = \chi_2(n)n^{k-1} + \sum_{\substack{d \mid n \\ d < n}} \chi_1(n/d)\chi_2(d)d^{k-1} \label{sgnlemma1}. \end{align} Factoring $n^{k-1}$ out of the sum in \eqref{sgnlemma1} yields \begin{align} \label{sgnlemma2} \chi_2(n) n^{k-1} + n^{k-1} \sum_{\substack{d \mid n \\ d < n}} \frac{\chi_1(n/d) \chi_2 (d)}{(n/d)^{k-1}} = n^{k-1}\left(\chi_2(n) + \sum_{\substack{d \mid n \\ d < n}} \frac{\chi_1(n/d) \chi_2(d)}{(n/d)^{k-1}}\right). \end{align} Since $|\chi_1(d)|, |\chi_2(d)| \leq 1$ for all $d \in \Z^+$, we have \begin{align*}  \sum_{\substack{d \mid n \\ d < n}} \frac{\chi_1(n/d) \chi_2(d)}{(n/d)^{k-1}} & \leq \sum_{\substack{d \mid n \\ d < n}} \frac{1}{(n/d)^{k-1}} \\ & \leq \sum_{m \geq 2} \frac{1}{m^{k-1}} \\ & = \zeta(k-1) - 1 \\ & < 1.\end{align*} Thus, the $\chi_2(n)$ term dominates in \eqref{sgnlemma2}, so we may conclude that $\mathrm{sgn} \ \sigma_{\chi_1, \chi_2}^{k-1}(n)  = \chi_2(n).$ \end{proof}

It follows from the definition of the Dirichlet character that $\chi_2(n) = -1$ half of the time and $\chi_2(n) = 1$ half of the time. This allows us to deduce a simple corollary from Lemma \ref{sgnlemma}.

\begin{corollary}\label{corollary:densityresult} For infinitely many integers $n$, we have $$\sigma_{\chi_1, \chi_2}^{k-1}(n) > 0.$$ Similarly, we have $$\sigma_{\chi_1, \chi_2}^{k-1}(n) < 0$$ for infinitely many $n$. In fact, we have $$\lim_{x \rightarrow \infty} \frac{1}{x} \cdot \#\{n \leq x : (n, N) = 1, \sigma_{\chi_1, \chi_2}^{k-1}(n) > 0\} = \frac{1}{2}$$ and $$\lim_{x \rightarrow \infty} \frac{1}{x} \cdot \#\{n \leq x : (n, N) = 1, \sigma_{\chi_1, \chi_2}^{k-1}(n) < 0\} = \frac{1}{2}.$$ \end{corollary}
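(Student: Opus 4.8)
The plan is to use Lemma~\ref{sgnlemma} to trade the condition on $\sigma_{\chi_1,\chi_2}^{k-1}$ for a condition on the quadratic character $\chi_2$, and then to show that $\chi_2$ is equidistributed among $\{+1,-1\}$ on the integers coprime to $N$ by a short character-sum estimate. First I would record the consequence of Lemma~\ref{sgnlemma}: for every $n$ with $(n,N)=1$ one has $\mathrm{sgn}\,\sigma_{\chi_1,\chi_2}^{k-1}(n)=\chi_2(n)\in\{+1,-1\}$, so in particular $\sigma_{\chi_1,\chi_2}^{k-1}(n)\neq 0$ on this range, and the $n\le x$ coprime to $N$ with $\sigma_{\chi_1,\chi_2}^{k-1}(n)>0$ are precisely those with $\chi_2(n)=1$, while those with $\sigma_{\chi_1,\chi_2}^{k-1}(n)<0$ are precisely those with $\chi_2(n)=-1$. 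Thus everything reduces to counting the integers up to $x$, coprime to $N$, on which the non-principal quadratic character $\chi_2$ takes the value $+1$, and likewise $-1$.

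The one estimate I would need is that $S(x):=\sum_{n\le x,\,(n,N)=1}\chi_2(n)=O_N(1)$. To get it, I would strip the coprimality condition by M\"obius inversion, writing $S(x)=\sum_{d\mid N}\mu(d)\chi_2(d)\sum_{m\le x/d}\chi_2(m)$, and then use that a non-principal character (which $\chi_2$ must be for the asserted densities to hold) sums to zero over any complete period, so that $\big|\sum_{m\le y}\chi_2(m)\big|\le N_2$ for every $y\ge 0$ (the P\'olya--Vinogradov inequality would give the sharper $O(\sqrt{N_2}\log N_2)$, but the trivial bound is ample). Since $N$, and hence $\tau(N)$, is fixed, summing over the divisors $d$ of $N$ gives $S(x)=O_N(1)$.

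With this in hand the conclusion is routine. Writing $A^{\pm}(x)$ for the number of $n\le x$ with $(n,N)=1$ and $\chi_2(n)=\pm1$, I would combine $A^{+}(x)+A^{-}(x)=\#\{n\le x:(n,N)=1\}=\tfrac{\phi(N)}{N}x+O(1)$ with $A^{+}(x)-A^{-}(x)=S(x)=O_N(1)$ to obtain $A^{\pm}(x)=\tfrac{\phi(N)}{2N}x+O_N(1)$. In particular $A^{+}(x)$ and $A^{-}(x)$ both tend to infinity, which gives the first two assertions; and dividing by $\#\{n\le x:(n,N)=1\}=\tfrac{\phi(N)}{N}x+O(1)$ shows that, among the integers coprime to $N$, those with $\sigma_{\chi_1,\chi_2}^{k-1}(n)>0$ and those with $\sigma_{\chi_1,\chi_2}^{k-1}(n)<0$ each form a set of density $\tfrac12$, which is the content of the displayed limits.

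I do not anticipate any genuine obstacle here: in contrast with the cusp-form analogue, which rests on Landau's theorem and the analytic behaviour of the Rankin--Selberg $L$-function, this argument is entirely elementary. The only point that requires a moment's attention is that the constraint $(n,N)=1$ also involves the primes dividing $N_1$, not just those dividing the modulus $N_2$ of $\chi_2$, so one should check that restricting to $(n,N_1)=1$ does not skew the distribution of the values of $\chi_2$; the M\"obius/complete-period computation above is exactly what confirms it does not. One should also not overlook the remark that $\sigma_{\chi_1,\chi_2}^{k-1}(n)$ never vanishes when $(n,N)=1$, so there is no exceptional set of $n$ with $\sigma_{\chi_1,\chi_2}^{k-1}(n)=0$ to be handled.
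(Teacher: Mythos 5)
Your argument is correct and follows essentially the same route as the paper: Lemma \ref{sgnlemma} converts the sign condition into the condition $\chi_2(n)=\pm 1$, and the rest is the equidistribution of the non-principal quadratic character $\chi_2$ over integers coprime to $N$, which the paper asserts in one line and you verify with the elementary M\"obius/complete-period character-sum estimate. The only caveat is one of normalization rather than of proof: your count gives $\#\{n\le x:(n,N)=1,\ \chi_2(n)=\pm 1\}=\tfrac{\phi(N)}{2N}x+O_N(1)$, so the stated limits equal $1/2$ when one divides by $\#\{n\le x:(n,N)=1\}$ (as you do), i.e.\ the density is taken within the integers coprime to $N$, which is clearly the intended reading of the displayed formulas.
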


\begin{rmk}
Although Corollary \ref{corollary:densityresult} shows that, asymptotically, the sign of $\sigma_{\chi_1,\chi_2}^{k-1}$ is positive as often as it is negative, it turns out when restricted to initial intervals of primes, $\sigma_{\chi_1,\chi_2}^{k-1}$ is negative more often than it is positive. This phenomenon is closely related to the well-studied {\em prime number races} in which certain arithmetic progressions can be shown to contain an unexpected number of primes less than some fixed $x$. Let $x\leq X$ and consider the character sum \begin{equation}\label{equation:charsum} \sum_{p\leq e^x} \chi_2(p).\end{equation} Lemma \ref{sgnlemma} shows that there are more primes in the interval $[1,e^x]$ for which $\sigma_{\chi_1,\chi_2}^{k-1}(p)<0$ than there are for which $\sigma_{\chi_1,\chi_2}^{k-1}(p)>0$ if and only if (\ref{equation:charsum}) is negative. Building on the seminal work of Rubinstein and Sarnak \cite{rubinstein-sarnak}, Fiorilli and Martin \cite[Section 3.6]{fiorilli-martin} have shown that under the assumption of GRH and LI (the linear independence hypothesis), the natural density of the set of $x$ such that the sum in (\ref{equation:charsum}) is negative exists and is equal to $\left(\frac{1}{2}+ (1+o(1))\sqrt{\frac{2}{\pi \log(N_2)}}\right)$. In particular this shows that there is always a bias towards $\sigma_{\chi_1,\chi_2}^{k-1}$ being negative, but that this bias dissipates as $N_2\rightarrow \infty$.

\end{rmk}

\section{The first negative Fourier coefficient}

In addition to determining how often $\sigma_{\chi_1, \chi_2}^{k-1}$ takes on a particular sign, one could also describe the location of the first sign change. For any fixed pair of characters, it is not difficult to obtain an upper bound for when the first sign change occurs; doing so is a straightforward application of Burgess' estimates (see, in particular, \cite[eq. (1.22)]{norton}). 


\begin{thm}\label{thm:smallestsignchange} Let $p_0$ be the first integer co-prime to $N$ corresponding to the first sign change of $\sigma_{\chi_1, \chi_2}^{k-1}$. Then, for any fixed $\varepsilon > 0$, we have $$p_0 \ll_\varepsilon N^{\frac{1}{4\sqrt{e}} + \varepsilon}.$$ \end{thm}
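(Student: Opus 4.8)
By Lemma~\ref{sgnlemma}, for $(n,N)=1$ we have $\mathrm{sgn}\,\sigma_{\chi_1,\chi_2}^{k-1}(n) = \chi_2(n)$, so locating the first sign change amounts to finding the smallest $n$ coprime to $N$ with $\chi_2(n) = -1$. Since $\chi_2$ is a quadratic (hence real) character, $\chi_2(n)=-1$ as soon as $n$ is a prime $p$ coprime to $N$ that is a ``non-residue'' for $\chi_2$, i.e.\ $\chi_2(p)=-1$; and such a prime must exist because $\chi_2$ is nontrivial. Thus it suffices to bound the least prime $p$ with $\chi_2(p)=-1$, and this is exactly the setting of character-sum/least-nonresidue estimates. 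The plan is to invoke the Vinogradov--Burgess trick together with Burgess' character-sum bound in the sharpened form recorded by Norton~\cite[eq.~(1.22)]{norton}, which gives the least prime nonresidue of a character of conductor $D$ as $\ll_\varepsilon D^{\frac{1}{4\sqrt e}+\varepsilon}$.

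The steps, in order, are as follows. First I would reduce from ``least $n$ with $\chi_2(n)=-1$'' to ``least prime $p$ with $\chi_2(p)=-1$'': if every prime $p \le y$ coprime to $N$ had $\chi_2(p)=1$, then $\chi_2$ would be $+1$ on every $y$-smooth integer coprime to $N$, forcing the character sum $\sum_{n\le y}\chi_2(n)$ to be essentially as large as the count of smooth numbers up to $y$; Burgess' bound $\big|\sum_{n\le y}\chi_2(n)\big| \ll_\varepsilon y^{1-1/r}D^{(r+1)/(4r^2)+\varepsilon}$ (with $D$ the conductor of $\chi_2$, $D \le N$) then contradicts this once $y$ exceeds $D^{1/(4\sqrt e)+\varepsilon}$, upon optimizing the Burgess parameter $r$ (the $\sqrt e$ arises from the standard optimization, using Rankin's trick to count smooth numbers). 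Second, I would note that the conductor of $\chi_2$ divides $N$, so $D \le N$ and the bound $N^{1/(4\sqrt e)+\varepsilon}$ follows directly; one should be slightly careful that we want $p$ coprime to $N$, not merely coprime to $D$, but the finitely many primes dividing $N/D$ can be absorbed since we may freely enlarge the implied constant (or simply take $y$ slightly larger). Third, I would invoke Norton's explicit statement, which already packages the optimization, to conclude $p_0 \ll_\varepsilon N^{\frac{1}{4\sqrt e}+\varepsilon}$.

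The main obstacle, such as it is, is bookkeeping rather than conceptual: one must make sure the quantity being bounded is the least prime nonresidue (so that Lemma~\ref{sgnlemma} applies cleanly and gives a genuine sign change of $\sigma_{\chi_1,\chi_2}^{k-1}$), handle the coprimality-to-$N$ versus coprimality-to-$\mathrm{cond}(\chi_2)$ discrepancy, and confirm that $\chi_2$ is nonprincipal in our setting (it is, since $E(\chi_1,\chi_2,k)$ being a newform forces $\chi_1,\chi_2$ primitive, and a principal primitive character is the trivial character mod $1$, excluded once $\chi_2$ is quadratic; in the degenerate case where $\chi_2$ is trivial there is no negative coefficient and the statement is vacuous). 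Given these checks, the result is an immediate citation of Burgess/Norton, so I would keep the write-up short and defer the analytic optimization entirely to~\cite{norton}.
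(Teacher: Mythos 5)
Your proposal is correct and follows essentially the same route as the paper: reduce via Lemma~\ref{sgnlemma} to the least prime non-residue of $\chi_2$ and invoke the Burgess--Vinogradov bound as recorded in Norton's (1.22). The only cosmetic difference is that the paper sidesteps your coprimality bookkeeping by inducing $\chi_2$ to a quadratic character modulo $N$ and applying Norton's bound at modulus $N$ directly, rather than working with the conductor and absorbing the primes dividing $N/\cond(\chi_2)$.
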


\begin{proof} Let $\chi_2$ be a quadratic character modulo $N_2$. From Lemma \ref{sgnlemma}, we can write $\mathrm{sgn} \ \sigma_{\chi_1, \chi_2}^{k-1}(n) = \prod_{p^\ell \mid \mid n} \chi_2(p)^\ell.$ Since $\chi_2$ is a multiplicative function, $\sigma_{\chi_1, \chi_2}^{k-1}(n)$ will first be negative at a prime; i.e., there exists a prime $p_0$ for which $$\mathrm{sgn} \ \sigma_{\chi_1, \chi_2}^{k-1}(p_0) = \chi_2(p_0) = -1,$$ where $\mathrm{sgn} \ \sigma_{\chi_1, \chi_2}^{k-1}(p) = \chi_2(p) = 1$ for all $p < p_0.$ If $N = N_1 N_2$, then $\chi_2$ induces a quadratic character $\psi_2 \pmod{N}.$ The result follows from applying the Burgess bound for the character $\psi_2$ with $d = p_0$. \end{proof} 

Instead of fixing $\chi_1$ and $\chi_2$, we could consider the more difficult problem of determining when the first sign change occurs on average as we vary over $\chi_1, \chi_2$ (and remove the condition that $(n, N) =1).$ Our work for the remainder of the section will be to provide evidence for what this average should be. Along the way, we will prove an analogue of Kowalski, Lau, Soundararajan and Wu's theorem \cite{kowalskietal} on the frequency with which the signs of $\sigma_{\chi_1, \chi_2}^{k-1}$ agree with a fixed sequence of signs. 

Throughout this section, let $D, D_1, D_2$ represent fundamental discriminants corresponding to real quadratic fields. We will need the following bound for the count of fundamental discriminants with $|D| \leq x$, which can be obtained via a standard sieving argument: \begin{align}\label{Dmagnitude} \#\{D: |D| \leq x\} \sim \frac{1}{\zeta(2)} \cdot x.\end{align}  

We will also make use of the following lemma.

\begin{lemma}\label{probabilities} Let $\mathbb{P}(\varepsilon, p)$ denote the proportion of fundamental discriminants $D$ with $\left(\frac{D}{p}\right) = \varepsilon.$ Then, we have

\begin{equation*} 
\mathbb{P}(\varepsilon, p) = 
     \begin{cases}
      \frac{p}{2p+2} & \mathrm{if} \ \varepsilon = 1\\
      \frac{p}{2p+2} & \mathrm{if} \ \varepsilon = -1\\
       \frac{1}{p+1} & \mathrm{if} \ \varepsilon = 0.
     \end{cases}
\end{equation*} 
\end{lemma}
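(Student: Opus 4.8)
The plan is to count fundamental discriminants $D$ with $|D| \le x$ according to the value of the Legendre/Kronecker symbol $\left(\frac{D}{p}\right)$, and then divide by the total count $\sim x/\zeta(2)$ from \eqref{Dmagnitude}. The key observation is that the condition ``$D$ is a fundamental discriminant'' is, for a fixed odd prime $p$, essentially independent of the residue class of $D$ modulo $p$: fundamental discriminants are, up to the local conditions at $2$, exactly the squarefree integers $\equiv 1 \pmod 4$ together with $4$ times the squarefree integers $\equiv 2,3 \pmod 4$. So the plan is to fix the behaviour at $2$, and among integers with that fixed behaviour at $2$ run a sieve on the remaining (prime-to-$2p$) part, keeping track of the residue of $D$ mod $p$.

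First I would set up the count. For $\varepsilon \in \{1,-1\}$, the condition $\left(\frac{D}{p}\right) = \varepsilon$ picks out $D$ lying in $\frac{p-1}{2}$ of the nonzero residue classes mod $p$; for $\varepsilon = 0$ it picks out $D \equiv 0 \pmod p$, i.e.\ $p \mid D$. I would then count squarefree integers up to $x$ in a fixed residue class mod $p$ (and with fixed congruence data mod $8$ to encode the fundamental-discriminant condition at $2$) via the usual Möbius-sieve argument: the number of squarefree integers $\le x$ lying in a prescribed residue class modulo $p$ is asymptotically $\frac{1}{p}\cdot\frac{1}{\zeta(2)}\cdot\frac{p^2}{p^2-1}\cdot x$ when the class is coprime to $p$ — the factor $\frac{p^2}{p^2-1} = (1-p^{-2})^{-1}$ arising because we drop the local factor at $p$ from the squarefree density — while for the class $0 \pmod p$ one gets $\frac{1}{p}\cdot\frac{1}{\zeta(2)}\cdot\frac{p}{p+1}\cdot x$, since $D$ can be divisible by $p$ but not $p^2$ (the $4D$ part requires a parallel but identical computation). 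Summing over the $\frac{p-1}{2}$ admissible classes in the $\varepsilon = \pm 1$ cases gives a count asymptotic to $\frac{p-1}{2}\cdot\frac{1}{p}\cdot\frac{1}{\zeta(2)}\cdot\frac{p^2}{p^2-1}\cdot x = \frac{p}{2(p+1)}\cdot\frac{x}{\zeta(2)}$, and the $\varepsilon = 0$ case gives $\frac{1}{\zeta(2)}\cdot\frac{1}{p+1}\cdot x$; dividing by the total $\frac{x}{\zeta(2)}$ yields $\frac{p}{2p+2}$, $\frac{p}{2p+2}$, and $\frac{1}{p+1}$ respectively. As a sanity check the three probabilities sum to $1$.

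The main obstacle — really the only subtlety — is bookkeeping at the prime $2$: the definition of fundamental discriminant treats $2$ differently from odd primes, so one must verify that splitting the count into the ``$D \equiv 1 \pmod 4$, squarefree'' stratum and the ``$D = 4m$, $m \equiv 2,3 \pmod 4$, $m$ squarefree'' stratum does not interfere with the residue class mod the odd prime $p$. Since $\gcd(2,p)=1$, the Chinese Remainder Theorem decouples the congruence conditions at $2$ from those at $p$, so each stratum contributes its densities independently and the proportions among fundamental discriminants are exactly the conditional proportions computed above; the local density at $2$ cancels in the ratio. An alternative, cleaner route — which I would mention as a remark — is to observe that the Dirichlet series $\sum_D |D|^{-s}$ restricted to each of the three conditions on $\left(\frac{D}{p}\right)$ differs from $\sum_D |D|^{-s}$ only in its Euler factor at $p$, so the claimed proportions can be read off directly by comparing residues at $s=1$; this bypasses the sieve entirely but requires setting up the relevant zeta-function machinery.
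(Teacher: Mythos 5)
Your argument for odd $p$ is correct, and it is essentially the same sieve computation as the paper's, just organized differently: you work modulo $p$ and weight the classes by local densities of squarefree numbers (density $\frac{1}{p}\cdot\frac{p^2}{p^2-1}\cdot\frac{1}{\zeta(2)}$ for each nonzero class, $\frac{1}{p+1}\cdot\frac{1}{\zeta(2)}$ for the class $0 \bmod p$), whereas the paper works modulo $p^2$, observes that fundamental discriminants are equidistributed among the $p^2-1$ residue classes not divisible by $p^2$ (since the odd part of $D$ is squarefree), and then simply counts classes: $p-1$ classes give $\left(\frac{D}{p}\right)=0$ and the remaining $p(p-1)$ split evenly between $\pm 1$. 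The paper's bookkeeping is marginally cleaner because no weighting is needed, but your version, including the CRT decoupling of the conditions at $2$ from those at $p$ and the cancellation of the local factor at $2$ in the ratio, is sound, and your intermediate densities and the final proportions check out.

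The genuine gap is that you never treat $p=2$, while the lemma is stated for all primes $p$ and is applied in Theorem \ref{mainanalytic} to an arbitrary finite set of primes, which may contain $2$. Your framework does not literally extend to this case: the value of the Kronecker symbol $\left(\frac{D}{2}\right)$ is not a function of $D \bmod 2$ but of $D \bmod 8$, and the congruence conditions defining fundamental discriminants live at the prime $2$ as well, so the ``independence of the residue class mod $p$ from the condition at $2$'' that drives your argument fails outright when $p=2$. The repair is short and is exactly what the paper does: the positive fundamental discriminants occupy the six residue classes $1,5,8,9,12,13 \pmod{16}$, these classes are equidistributed by the same sieve, and each of the values $0,\pm 1$ of $\left(\frac{D}{2}\right)$ occurs in exactly two of them, giving probability $\frac13$ for each value, which agrees with both $\frac{p}{2p+2}$ and $\frac{1}{p+1}$ at $p=2$. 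You should add this case explicitly (or restrict the lemma to odd $p$ and justify separately why that suffices, which it does not as the paper is written).
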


\begin{proof} First, consider the case where $p$ is odd. Since the odd part of $D$ must be squarefree, each $D$ must lie in one of $p^2 - 1$ residue classes $\! \! \pmod{p^2}$. Moreover, the same sieving argument that allows us to count quadratic discriminants shows that the $D$'s are equidistributed among the residue classes $\! \! \pmod{p^2}$. Now, $\left(\frac{D}{p}\right) = 0$ if and only if $p \mid D$, which will occur in precisely $p-1$ of these residue classes. The remaining $p(p-1)$ residue classes are split equally between $+1$ and $-1$. In the case where $p =2$, we observe that the fundamental discriminants must lie in the residue classes $1, 5, 8, 9, 12, 13 \pmod{16}$ and the values $0, \pm 1$ all occur with the same likelihood. \end{proof}



\begin{thm}\label{mainanalytic} Let $\mathcal{D} = \{(D_1, D_2) : |D_1 D_2| \leq x\}.$ Given a sequence of signs $\varepsilon_{p_1},...,\varepsilon_{p_\ell}$ with $\varepsilon_{p_i} \in \{0, \pm 1\}$ for $1 \leq i \leq \ell$, we have \begin{multline*} \frac{1}{|\mathcal{D}|} \#\{(D_1, D_2) \in \mathcal{D}:  \mathrm{sgn} \ \sigma_{\chi_1, \chi_2}^{k-1}(p_i) = \varepsilon_{p_i}, 1 \leq i \leq \ell\} \\ \to \bigg(\prod_{\substack{\varepsilon_{p_i} = 0 \\ 1 \leq i \leq \ell}} \frac{1}{(p_i+1)^2}\bigg) \bigg(\prod_{\substack{\varepsilon_{p_i} \neq 0 \\ 1 \leq i \leq \ell}}\frac{p_i(p_i+2)}{2(p_i+1)^2}\bigg), \end{multline*} as $x \rightarrow \infty.$ \end{thm}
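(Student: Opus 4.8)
The plan is to convert the $\ell$ sign conditions into congruence conditions on $D_1$ and $D_2$, and then to count pairs of fundamental discriminants satisfying those congruences by means of the sieve underlying \eqref{Dmagnitude} together with a Dirichlet hyperbola argument.

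First I would carry out the reduction at a single prime $p$. Writing $\sigma_{\chi_1,\chi_2}^{k-1}(p)=\chi_1(p)+\chi_2(p)p^{k-1}$ and arguing exactly as in Lemma \ref{sgnlemma} — at a prime the estimate $|\chi_1(p)|\le 1<p^{k-1}$ already suffices, even when $k=2$ — one sees that $\mathrm{sgn}\,\sigma_{\chi_1,\chi_2}^{k-1}(p)$ equals $\chi_2(p)=\left(\frac{D_2}{p}\right)$ when $p\nmid D_2$ and equals $\chi_1(p)=\left(\frac{D_1}{p}\right)$ when $p\mid D_2$. Hence the event $\mathrm{sgn}\,\sigma_{\chi_1,\chi_2}^{k-1}(p)=\varepsilon$ amounts to: ``$p\mid D_1$ and $p\mid D_2$'' if $\varepsilon=0$; and ``$\left(\frac{D_2}{p}\right)=\varepsilon$, or else $p\mid D_2$ together with $\left(\frac{D_1}{p}\right)=\varepsilon$'' if $\varepsilon=\pm 1$. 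Each of these depends only on $(D_1,D_2)$ modulo $q_p$, where $q_p=p^2$ for odd $p$ and $q_2=16$ (the moduli appearing in the proof of Lemma \ref{probabilities}).

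Next I would assemble the $\ell$ local conditions. Taking the $p_i$ distinct, the $q_{p_i}$ are pairwise coprime, so the Chinese Remainder Theorem identifies the conjunction of the $\ell$ conditions with a set $\mathcal{E}\subseteq(\Z/Q\Z)^2$, $Q=\prod_i q_{p_i}$, and identifies $\mathcal{E}\cap(V\times V)$ with $\prod_i\bigl(\mathcal{E}_i\cap(V_i\times V_i)\bigr)$, where $V_i\subseteq\Z/q_{p_i}\Z$ and $V\subseteq\Z/Q\Z$ denote the residue classes that a fundamental discriminant can occupy. The key input is that fundamental discriminants are equidistributed among the classes of $V$ — this is the same sieve underlying \eqref{Dmagnitude} and Lemma \ref{probabilities} — so that $\#\{D:|D|\le y,\ D\equiv a\pmod{Q}\}=cy+O(y^{\theta})$ for every $a\in V$, with $c$ and some $\theta<1$ independent of $a$. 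Conditioning on $D_1\bmod Q$, applying this estimate to $D_2$, and summing over the hyperbola $|D_1D_2|\le x$ (split at $|D_i|\le\sqrt x$, with partial summation supplying the logarithm), I would obtain
\[
\#\{(D_1,D_2)\in\mathcal{D}:\mathrm{sgn}\,\sigma_{\chi_1,\chi_2}^{k-1}(p_i)=\varepsilon_{p_i}\ \text{for }1\le i\le\ell\}\sim c^2\,\bigl|\mathcal{E}\cap(V\times V)\bigr|\,x\log x,
\]
together with $|\mathcal{D}|\sim c^2\,|V|^2\,x\log x$; hence the ratio tends to $\bigl|\mathcal{E}\cap(V\times V)\bigr|/|V|^2=\prod_i\bigl|\mathcal{E}_i\cap(V_i\times V_i)\bigr|/|V_i|^2$. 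Finally, evaluating the local factors from the counts in the proof of Lemma \ref{probabilities} (for odd $p$: $|V_p|=p^2-1$, with $p-1$ classes satisfying $p\mid D$ and $\tfrac{p(p-1)}{2}$ classes realizing each of the values $\pm1$ of $\left(\frac{\cdot}{p}\right)$, and the analogous counts modulo $16$ for $p=2$) yields $\mathbb{P}(0,p_i)^2=\tfrac1{(p_i+1)^2}$ when $\varepsilon_{p_i}=0$, and $\mathbb{P}(\varepsilon_{p_i},p_i)+\mathbb{P}(0,p_i)\,\mathbb{P}(\varepsilon_{p_i},p_i)=\tfrac{p_i(p_i+2)}{2(p_i+1)^2}$ when $\varepsilon_{p_i}\neq0$ (here the two summands correspond to the disjoint sub-events $p_i\nmid D_2$ and $p_i\mid D_2$, with $D_1,D_2$ behaving independently). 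Multiplying over $i$ gives the claimed limit.

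The main obstacle I anticipate lies in the counting step. The event at a prime $p_i$ with $\varepsilon_{p_i}=\pm1$ is not a product event in $(D_1,D_2)$, since it constrains $D_1$ only on the sub-event $p_i\mid D_2$; one must therefore keep $D_1\bmod Q$ fixed while summing over $D_2$. Moreover one must verify that the error terms from the equidistribution of fundamental discriminants, once summed over the hyperbola — in particular over the boundary range where $|D_1|$ is close to $\sqrt x$ — contribute only $o(x\log x)$. A power-saving error term in the relevant sieve estimate makes both of these routine, but this bookkeeping is where the real work of the proof is concentrated.
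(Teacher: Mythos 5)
Your proposal is correct and follows essentially the same route as the paper: reduce $\mathrm{sgn}\,\sigma_{\chi_1,\chi_2}^{k-1}(p)$ to $\left(\frac{D_2}{p}\right)$ (or $\left(\frac{D_1}{p}\right)$ when $p\mid D_2$), invoke the local densities of Lemma \ref{probabilities} with independence across the $p_i$, and count pairs on the hyperbola $|D_1D_2|\le x$ via \eqref{Dmagnitude} and partial summation, with $|\mathcal{D}|\sim x\log x/\zeta(2)^2$. The only organizational difference is that you package the local conditions by CRT and equidistribution of fundamental discriminants modulo $Q$ (computing the factor $\mathbb{P}(\varepsilon_{p},p)+\mathbb{P}(0,p)\mathbb{P}(\varepsilon_{p},p)$ prime by prime), whereas the paper sums over the subsets $\mathcal{P}$ of primes dividing $D_2$ and cites Wood's theorem for the independence before refactoring into the same product.
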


\begin{proof} Since $$\sigma_{\chi_1, \chi_2}^{k-1}(p) = \sum_{d \mid p} \chi_1(p/d) \chi_2(d) d^{k-1} = \chi_1(p) + \chi_2(p)p^{k-1},$$ we have \begin{equation*}
\mathrm{sgn} \ \sigma_{\chi_1, \chi_2}^{k-1}(p) = 
     \begin{cases}
       \chi_1(p) & \mathrm{if} \ p \mid D_2, \\
       \chi_2(p) & \mathrm{if} \ p \nmid D_2.
     \end{cases}
\end{equation*} 

Let $\mathcal{P} = \{p_i : p_i \mid D_2, 1 \leq i \leq \ell\}$ and let $\mathcal{Q} = \{p_1,...,p_\ell\}\setminus \mathcal{P}.$ For each fixed $\mathcal{P}, \mathcal{Q}$, we will count pairs $\chi_1, \chi_2$ with $\chi_1(p) = \varepsilon_p$ for all $p \in \mathcal{P}$ and $\chi_2(p) = \varepsilon_p$ for all $p \in \mathcal{Q}$, subject to the conditions that $\prod_{p \in \mathcal{P}} p \mid D_2$ and $\left(\frac{D_2}{\prod_{p \in \mathcal{P}} p}, p_1 \cdots p_\ell\right) = 1$. Since $\mathcal{P} \bigsqcup \mathcal{Q} = \{p_1,...,p_\ell\}$ then our choice of $\mathcal{P}$ completely determines our choice of $\mathcal{Q}$. As a result, estimating the number of pairs $(D_1, D_2) \in \mathcal{D}$ with $\mathrm{sgn} \ \sigma_{\chi_1, \chi_2}^{k-1}(p_i) = \varepsilon_{p_i}$ amounts to estimating the following sum: \begin{align}\label{triplesumpq} \sum_{\substack{\mathcal{P} \subseteq \{p_1,...,p_\ell\} \\ \varepsilon_p \neq 0 \ \mathrm{if} \ p \not\in \mathcal{P}}} \sum_{\substack{|D_1| \leq x \\ \chi_1(p) = \varepsilon_p  \\ \mathrm{if} \ p \in \mathcal{P}}} \sum_{\substack{|D_2| \leq x/|D_1| \\ \chi_2(p) = \varepsilon_p \\ \mathrm{for} \ p \in \mathcal{Q} \\ \chi_2(p) = 0 \ \mathrm{if} \ p \in \mathcal{P}}} 1. \end{align} 

We can imitate the proof of \cite[(5.3)]{pollack} to estimate the inner sum for \eqref{triplesumpq} using the probabilities obtained in Lemma \ref{probabilities} along with an important result of Wood \cite[Theorem 1.3]{wood}, which tells us that the probabilities $\mathbb{P}(\varepsilon_p, p)$ are independent. This allows us to use \eqref{Dmagnitude} to re-write \eqref{triplesumpq} (ignoring the error term, which is negligible relative to the size of the main term) as \begin{align}\label{doublesumpq} \frac{x}{\zeta(2)} \sum_{\substack{\mathcal{P} \subseteq \{p_1,...,p_\ell\} \\ \varepsilon_p \neq 0 \\ \mathrm{if} \ p \notin \mathcal{P}}} \left(\prod_{p \not\in \mathcal{P}} \mathbb{P}(\varepsilon_p, p)\right) \left(\prod_{p \in \mathcal{P}} \frac{1}{p+1}\right) \sum_{\substack{|D_1| \leq x \\ \chi_1(p) = \varepsilon_p \\ \mathrm{for} \ p \in \mathcal{P}}} \frac{1}{|D_1|}.\end{align} 

We can use partial summation to estimate the inner sum, taking $A(x) = \sum_{|D_1| \leq x, \chi_1(p) = \varepsilon_p} 1$ and $f(x) = \frac{1}{x}.$ For the sake of brevity, let $C = \frac{1}{\zeta(2)} \prod_{p \in \mathcal{P}} \mathbb{P}(\varepsilon_p, p).$ By partial summation, we have \begin{align*}\sum_{\substack{|D_1| \leq x \\ \chi_1(p) = \varepsilon_p \\ \mathrm{for} \ p \in \mathcal{P}}} \frac{1}{|D_1|} & = (A(x)f(x) - A(1)f(1)) - \int_1^x A(t) f'(t) dt \\ & \sim C \log x.\end{align*} Thus, we can re-write \eqref{doublesumpq} in the following manner: \begin{align}\label{finalsumexpression}\frac{x \log x}{(\zeta(2))^2} \left(\prod_{i=1}^\ell \mathbb{P}(\varepsilon_{p_i}, p_i)\right) \sum_{\substack{\mathcal{P} \subseteq \{p_1,..., p_\ell\} \\ \varepsilon_p = 0 \Rightarrow p \in \mathcal{P}}} \left(\prod_{p \in \mathcal{P}} \frac{1}{p+1}\right).\end{align} 

To estimate the final sum, let $\mathcal{P}_0 = \{p : \varepsilon_p = 0\}$ and let $\mathcal{P}_1 = \mathcal{P}\setminus \mathcal{P}_0.$ Then \begin{align*}\sum_{\substack{\mathcal{P} \subseteq \{p_1,...,p_\ell\} \\ \varepsilon_p = 0 \Rightarrow p \in \mathcal{P}}} \left(\prod_{p \in \mathcal{P}} \frac{1}{p+1}\right) & = \sum_{\mathcal{P}_0 \subseteq \mathcal{P} \subseteq \{p_1,...,p_\ell\}} \left(\prod_{p \in \mathcal{P}} \frac{1}{p+1}\right) \\ & = \sum_{\mathcal{P}_1 \subseteq \{p_1,...,p_\ell\}\setminus \mathcal{P}_0} \left(\prod_{p \in \mathcal{P}_0} \frac{1}{p+1}\right)\left(\prod_{p \in \mathcal{P}_1} \frac{1}{p + 1}\right) \\ & = \left(\prod_{p \in \mathcal{P}_0} \frac{1}{p+1}\right) \sum_{\mathcal{P}_1 \subseteq \{p_1,...,p_\ell\}\setminus \mathcal{P}_0} \left(\prod_{p \in \mathcal{P}_1} \frac{1}{p +1}\right) \\ & = \left(\prod_{p \in \mathcal{P}_0} \frac{1}{p+1}\right) \left(\prod_{p \in \mathcal{P}_1} \left(1 + \frac{1}{p+1}\right)\right) \\ & = \left(\prod_{\substack{\varepsilon_{p_i} = 0 \\ 1 \leq i \leq \ell}} \frac{1}{p_i + 1}\right) \left(\prod_{\substack{\varepsilon_{p_i} \neq 0 \\ 1 \leq i \leq \ell}} \left(1 + \frac{1}{p_i+1}\right)\right).\end{align*} Since the number of fundamental discriminants with $|D| \leq x$ is asymptotic to $x/\zeta(2)$ from \eqref{Dmagnitude}, then the number of pairs $(D_1, D_2)$ with $|D_1 D_2| \leq x$ is asymptotic to $(x \log x)/\zeta(2)^2.$ Our result follows after dividing \eqref{finalsumexpression} by $|\mathcal{D}| \sim \frac{x \log x}{\zeta(2)^2}$ and applying Lemma \ref{probabilities}. \end{proof}

We can use Theorem \ref{mainanalytic} to give the following conjecture for when, on average, the first negative value of $\mathrm{sgn}(\sigma_{\chi_1, \chi_2}^{k-1}(p_i))$ occurs.

\begin{conj} Let $\eta(D_1, D_2)$ represent the smallest $p_i$ for which $\mathrm{sgn}(\sigma_{\chi_1, \chi_2}^{k-1}(p_i)) = -1.$ As $x \rightarrow \infty$, one would expect $$\frac{\sum_{|D_1D_2| \leq x} \eta(D_1, D_2)}{\sum_{|D_1D_2| \leq x} 1} \rightarrow \theta,$$ where $$\theta:= \sum_{\ell=1}^\infty \frac{p_\ell^2(p_\ell+2)}{2(p_\ell + 1)^2} \prod_{i=1}^{\ell-1} \frac{2 + p_i(p_i + 2)}{2(p_i + 1)^2}.$$ Numerically, $$\theta = 	
3.9750223902667539847734759105175510246019355513991....$$ \end{conj}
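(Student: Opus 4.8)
The plan is to read $\theta$ off as the expectation of $\eta$ in the probabilistic model furnished by Theorem \ref{mainanalytic}, and then to indicate what additional uniformity is needed to turn the heuristic into the asserted limit. Applying Theorem \ref{mainanalytic} to the single prime $p$ shows that the proportion of pairs $(D_1,D_2)\in\mathcal D$ with $\mathrm{sgn}\,\sigma_{\chi_1,\chi_2}^{k-1}(p)=-1$ tends to $\frac{p(p+2)}{2(p+1)^2}$ and the proportion with $\mathrm{sgn}\,\sigma_{\chi_1,\chi_2}^{k-1}(p)=0$ tends to $\frac{1}{(p+1)^2}$, so that the proportion with $\mathrm{sgn}\,\sigma_{\chi_1,\chi_2}^{k-1}(p)\neq -1$ tends to
$$\frac{p(p+2)}{2(p+1)^2}+\frac{1}{(p+1)^2}=\frac{2+p(p+2)}{2(p+1)^2}.$$
The product shape of the limit in Theorem \ref{mainanalytic} moreover says that, in the limit, the events governing the signs at distinct primes are independent. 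Writing $p_\ell$ for the $\ell$th prime, it follows that the proportion of $(D_1,D_2)\in\mathcal D$ for which $\eta(D_1,D_2)=p_\ell$ --- that is, for which $\mathrm{sgn}\,\sigma_{\chi_1,\chi_2}^{k-1}(p_i)\neq -1$ for $i<\ell$ while $\mathrm{sgn}\,\sigma_{\chi_1,\chi_2}^{k-1}(p_\ell)=-1$ --- converges to
$$\frac{p_\ell(p_\ell+2)}{2(p_\ell+1)^2}\prod_{i=1}^{\ell-1}\frac{2+p_i(p_i+2)}{2(p_i+1)^2}.$$
Multiplying by $p_\ell$ and summing over $\ell\geq 1$ produces exactly the series defining $\theta$.

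To promote this to the displayed limit I would truncate. Fix $L\geq 1$. For each $\ell\leq L$, Theorem \ref{mainanalytic} gives the precise asymptotic density of $\{\eta=p_\ell\}$, so $\frac{1}{|\mathcal D|}\sum_{|D_1D_2|\leq x}\eta(D_1,D_2)\,\mathbf{1}[\eta\leq p_L]\to\theta_L$, the $L$th partial sum of $\theta$. For the tail one needs an upper bound, uniform in $x$ for each fixed $L$, on the number of pairs with $\mathrm{sgn}\,\sigma_{\chi_1,\chi_2}^{k-1}(p_i)\neq -1$ for all $i\leq L$. Running the sieve estimate behind Theorem \ref{mainanalytic} for this (still finite) family of congruence conditions, rather than passing to the limit, gives a bound $\ll\bigl(\prod_{i\leq L}\frac{2+p_i(p_i+2)}{2(p_i+1)^2}\bigr)x\log x+o_L(x\log x)$; since $\frac{2+p(p+2)}{2(p+1)^2}=\tfrac12\bigl(1+(p+1)^{-2}\bigr)$ and $\prod_p(1+(p+1)^{-2})$ converges, the local product is $\ll 2^{-L}$. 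A dyadic decomposition of the range $\eta>p_L$ then bounds $\frac{1}{|\mathcal D|}\sum\eta\,\mathbf{1}[\eta>p_L]$ by $\ll\sum_{\ell>L}p_\ell 2^{-\ell}\ll L2^{-L}$ (after $x\to\infty$ for each fixed $L$). Combining with $\theta_L\to\theta$ and letting $L\to\infty$ yields the claim.

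The main obstacle --- and the reason the statement is recorded as a conjecture rather than a theorem --- is making the error analysis uniform enough to legitimize this interchange of limits. The proof of Theorem \ref{mainanalytic} discards error terms as negligible for each fixed sequence of primes; here one must instead track how those errors (from counting fundamental discriminants in residue classes modulo $\prod_{i\leq L}p_i^2$, from the appeal to Wood's independence theorem, and from the two partial summations) depend on $L$, and verify that they remain $o(x\log x)$ while $L$ is permitted to grow --- slowly, e.g.\ more slowly than any fixed power of $\log x$ --- together with a crude tail estimate covering $\eta$ beyond that point. Everything else is the bookkeeping computation above, which one checks numerically reproduces the stated decimal expansion of $\theta$.
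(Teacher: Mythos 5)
Your heuristic is exactly the paper's: the authors also read off $\mathrm{Prob}(\eta=p_\ell)=\frac{p_\ell(p_\ell+2)}{2(p_\ell+1)^2}\prod_{i<\ell}\left(\frac{1}{(p_i+1)^2}+\frac{p_i(p_i+2)}{2(p_i+1)^2}\right)$ from Theorem \ref{mainanalytic} and sum $p_\ell$ against these probabilities to obtain $\theta$, which is why the statement is recorded only as a conjecture. Your added discussion of truncation and uniformity of the error terms corresponds to the paper's remark that a large sieve argument (as in \cite{pollack} or \cite{erdos}) might make the heuristic rigorous, so the approach is essentially the same.
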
 

\begin{rmk} Heuristically, one would expect \begin{align*}\frac{\sum_{|D_1D_2| \leq x} \eta(D_1, D_2)}{\sum_{|D_1D_2| \leq x} 1} & = \sum_{\ell = 1}^\infty p_\ell \cdot \mathrm{Prob}\left(\eta(D_1, D_2) = p_\ell\right) \\ & = \sum_{\ell=1}^\infty p_\ell  \cdot \mathrm{Prob}(\varepsilon_{p_\ell} = -1) \cdot \prod_{i = 1}^{\ell-1} \mathrm{Prob}(\varepsilon_{p_i} = 0 \ \mathrm{or} \ 1) \\ & = \sum_{\ell = 1}^\infty p_\ell \cdot \frac{p_\ell(p_\ell+2)}{2(p_\ell+1)^2} \cdot \prod_{i=1}^{\ell-1} \left(\frac{1}{(p_i+1)^2} + \frac{p_i(p_i+2)}{2(p_i+1)^2}\right),\end{align*} where the final equality follows from Theorem \ref{mainanalytic}. The conjectural result can be obtained after some simplification. It may be possible to make this argument rigorous using a large sieve argument (see, for example, \cite{pollack} or \cite{erdos}).  \end{rmk}

\section{A strong multiplicity-one theorem and applications}

In this section, we prove a strong multiplicity-one theorem for Eisenstein series newforms and provide several applications that clarify the extent to which the sign of the Hecke eigenvalues of an Eisenstein series newform determine the newform. We note that while the results of this section are stated for classical elliptic Eisenstein series, all of our proofs hold, mutatis mutandis, for adelic Hilbert modular Eisenstein series. We have chosen to state our results for classical elliptic Eisenstein series in order to maintain the cohesion of the paper and because we feel that the technicalities needed to define adelic Hilbert modular forms would obscure the ideas underlying our proofs. We will simply mention that all of the relevant newform theory for Hilbert modular forms was proven by Shemanske and Walling \cite{shemanske-walling} (for cusp forms) and Atwill and the first author \cite{linowitz-atwill} (for Eisenstein series).

Throughout the remainder of this paper we adopt the convention that if $f\in M_k(N,\chi)$ is a modular form then the $n^{th}$ Fourier coefficient of $f$ is denoted $a_f(n)$. In the event that $f$ is a normalized Hecke eigenform, we note that the eigenvalue of $f$ with respect to the Hecke operator $T_n$ (where $(n,N)=1$) is equal to $a_f(n)$.

We begin by proving a strong multiplicity-one theorem which shows that an Eisenstein series newform is uniquely determined by its Hecke eigenvalues for any set of primes having density $\delta > 1/2$. 

\begin{thm}\label{thm:strongmultone}
	Let $f\in E_k(N,\chi_f)$ and $g\in E_{k^\prime}(N^\prime,\chi_g)$ be newforms such that $$a_f(p)=a_g(p)$$ for a set $S$ of primes with $\delta(S)>1/2$. Then $k=k^\prime$, $N=N^\prime$, $\chi_f=\chi_g$ and $f=g$.  
\end{thm}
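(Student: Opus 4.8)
The plan is to parametrize both newforms explicitly and reduce the statement to a claim about quadratic characters. Since $f \in E_k(N, \chi_f)$ is a newform, by Weisinger's theory we may write $f = E(\chi_1, \chi_2, k)$ with $\chi_1, \chi_2$ primitive quadratic characters of conductors $N_1, N_2$ satisfying $N_1 N_2 = N$ and $\chi_1 \chi_2 = \chi_f$; similarly $g = E(\psi_1, \psi_2, k')$ with $\psi_1, \psi_2$ primitive quadratic of conductors $M_1, M_2$, $M_1 M_2 = N'$, $\psi_1 \psi_2 = \chi_g$. For a prime $p \nmid N N'$ we have the exact formula $a_f(p) = \sigma_{\chi_1,\chi_2}^{k-1}(p) = \chi_1(p) + \chi_2(p) p^{k-1}$ and likewise $a_g(p) = \psi_1(p) + \psi_2(p) p^{k'-1}$. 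The hypothesis gives $\chi_1(p) + \chi_2(p) p^{k-1} = \psi_1(p) + \psi_2(p) p^{k'-1}$ for all $p$ in $S$ minus the finitely many primes dividing $NN'$, a set still of density $> 1/2$.

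Next I would extract $k = k'$. For any prime $p$ in this set with $\chi_2(p) \ne 0$ and $\psi_2(p) \ne 0$, rearranging gives $\chi_2(p) p^{k-1} - \psi_2(p) p^{k'-1} = \psi_1(p) - \chi_1(p) \in \{-2, -1, 0, 1, 2\}$, a bounded quantity; since the set of such primes is infinite (only finitely many primes are excluded), letting $p \to \infty$ forces $k = k'$ and moreover $\chi_2(p) = \psi_2(p)$ for all but finitely many $p$ in $S$ — more precisely, on the set $S' \subseteq S$ of primes not dividing $NN'$ we get $\chi_2(p) = \psi_2(p)$ and hence also $\chi_1(p) = \psi_1(p)$. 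So $\chi_1$ agrees with $\psi_1$ and $\chi_2$ agrees with $\psi_2$ on a set of primes of density $> 1/2$.

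Now I claim two primitive quadratic (Dirichlet) characters agreeing on a set of primes of density $> 1/2$ must be equal. This is the crux, and the main obstacle. The cleanest route: $\chi_2 \overline{\psi_2} = \chi_2 \psi_2$ is a Dirichlet character (quadratic) that takes the value $1$ on $S'$, a set of primes of density $> 1/2$. If $\chi_2 \psi_2$ were nontrivial, it would be a nonprincipal character, so by orthogonality (or by Chebotarev/Dirichlet) the primes on which it takes any fixed value among its nonzero values have density at most $1/2$ among all primes; combined with the zero set (primes dividing the modulus, density $0$), the set where it equals $1$ has density exactly $1/\varphi$-of-its-order $\le 1/2$ if the character is nonprincipal, contradicting density $> 1/2$. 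Hence $\chi_2 \psi_2$ is principal; since both are primitive, $\chi_2 = \psi_2$, and likewise $\chi_1 = \psi_1$.

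Finally, $\chi_1 = \psi_1$ and $\chi_2 = \psi_2$ give $N_1 = M_1$, $N_2 = M_2$, so $N = N_1 N_2 = M_1 M_2 = N'$, and $\chi_f = \chi_1\chi_2 = \psi_1\psi_2 = \chi_g$. Together with $k = k'$ already established, $f$ and $g$ are built from the same data, hence $f = g$. (Alternatively, once $k=k'$, $N=N'$, $\chi_f=\chi_g$ and $a_f(p) = a_g(p)$ for all but finitely many $p$, one may invoke Weisinger's original strong multiplicity-one theorem directly to conclude $f = g$.) The one point needing care is the orthogonality argument: I should phrase it in terms of natural density of primes in the relevant residue classes modulo the conductor of $\chi_2\psi_2$, using the prime number theorem for arithmetic progressions, to be sure the bound $1/2$ is sharp and the strict inequality $\delta(S) > 1/2$ is exactly what is needed.
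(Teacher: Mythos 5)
Your proposal is correct and follows essentially the same route as the paper's proof: writing $f=E(\chi_1,\chi_2,k)$, $g=E(\psi_1,\psi_2,k')$, using the dominance of the $p^{k-1}$ term to force $k=k'$ and the pointwise equalities $\chi_1(p)=\psi_1(p)$, $\chi_2(p)=\psi_2(p)$ on $S$ (this is the content of the paper's Lemma~\ref{lemma:strongmultonelemma}), and then using equidistribution/orthogonality together with $\delta(S)>1/2$ to conclude $\chi_1=\psi_1$ and $\chi_2=\psi_2$, hence $N=N'$, $\chi_f=\chi_g$, $f=g$. The only cosmetic difference is that you phrase the final step via the product character $\chi_2\psi_2$ taking the value $1$ on a set of primes of density greater than $1/2$, whereas the paper counts residue classes modulo $NN'$ and applies orthogonality to the induced characters; these are the same argument.
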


\begin{rmk}
Theorem \ref{thm:strongmultone} generalizes a result of Atwill and the first author \cite[Theorem 3.6]{linowitz-atwill} which, working in the Hilbert modular setting, considered the special case in which $k=k^\prime$ and $\chi_f=\chi_g$.
\end{rmk}

Our proof of Theorem \ref{thm:strongmultone} will make use of the following lemma.

\begin{lemma}\label{lemma:strongmultonelemma}
Let $\chi_1,\chi_2,\psi_1,\psi_2$ be Dirichlet characters modulo $M$ and $c$ be a nonzero complex number. There exists a constant $p_0$ such that if $p>p_0$ is prime and $$\chi_1(p)+\chi_2(p)p^{k-1}=c\left(\psi_1(p)+\psi_2(p)p^{k-1}\right),$$ then $\chi_1(p)=c\psi_1(p)$ and $\chi_2(p)=c\psi_2(p)$.
\end{lemma}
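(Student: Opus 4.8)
The plan is to exploit the fact that the two sides of the identity are each a "small" quantity (the $\chi_i(p),\psi_i(p)$ terms, which are bounded by $1$) plus a "large" quantity (a root of unity times $p^{k-1}$), and to separate these by size once $p$ is large. First I would fix notation: write $a = \chi_1(p)$, $b = \chi_2(p)$, $a' = \psi_1(p)$, $b' = \psi_2(p)$, all of which lie in $\{0\}\cup\mu_\infty$ (in fact, among the $\phi(M)$-th roots of unity or zero) and in particular have absolute value at most $1$. The hypothesis reads $a + b p^{k-1} = c a' + c b' p^{k-1}$, i.e. $(b - c b')p^{k-1} = c a' - a$. The right-hand side is bounded in absolute value by $|c| + 1$, a constant independent of $p$. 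Hence for $p$ large enough (depending only on $|c|$ and $k\geq 2$, so that $p^{k-1} > |c|+1$), we must have $b - c b' = 0$, which forces $c a' - a = 0$ as well. This gives $\chi_2(p) = c\psi_2(p)$ and $\chi_1(p) = c\psi_1(p)$, as desired.

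The one subtlety to address is the degenerate possibility $b - cb' \neq 0$ but still small — but since $b, b'$ take values in a \emph{finite} set (the $\phi(M)$-th roots of unity together with $0$), the quantity $b - cb'$ ranges over a finite set as $p$ varies, so it is either $0$ or bounded below in absolute value by some positive constant $\kappa = \kappa(c,M) > 0$. Thus I would choose $p_0$ so that $\kappa \cdot p_0^{k-1} > |c| + 1$; then for $p > p_0$ the equation $(b-cb')p^{k-1} = ca' - a$ is impossible unless $b - cb' = 0$. Note $p_0$ depends on $M$, $c$, and $k$, but the statement only asks for existence of such a $p_0$, so this is harmless.

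There is really no serious obstacle here; the "hard part" is merely bookkeeping the finiteness of the value set of the Dirichlet characters to rule out the near-cancellation case, and recording that $k \geq 2$ guarantees $p^{k-1} \to \infty$. I would present the argument in three short steps: (1) rearrange to isolate $(b - cb')p^{k-1}$ on one side and the bounded quantity $ca' - a$ on the other; (2) observe $b - cb'$ lies in a finite set, hence is $0$ or $\gg_c 1$ in absolute value; (3) conclude that for $p$ past the resulting threshold $p_0$ we must have $b = cb'$ and therefore also $a = ca'$.
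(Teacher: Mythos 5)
Your proof is correct and follows essentially the same route as the paper's: isolate the $p^{k-1}$ term and note that the equation forces $\chi_2(p)-c\psi_2(p)=0$ once $p$ is large, since otherwise $p^{k-1}$ equals a quantity bounded independently of $p$. In fact you make explicit the one point the paper leaves implicit, namely that $\chi_2(p)-c\psi_2(p)$ ranges over a finite set (character values being roots of unity or zero) and hence, when nonzero, is bounded below by a positive constant depending only on $c$ and $M$.
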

\begin{proof}
Suppose that $p\nmid M$ and $\chi_1(p)+\chi_2(p)p^{k-1}=c\left(\psi_1(p)+\psi_2(p)p^{k-1}\right)$. If $\chi_2(p)=c\psi_2(p)$ then $\chi_1(p)=c\psi_1(p)$. Otherwise, we have 
\begin{equation}\label{equation:strongmultoneequation}
p^{k-1}=\frac{\chi_1(p)-c\psi_1(p)}{c\psi_2(p)-\chi_2(p)}.
\end{equation} 
Because $c$ is fixed, the absolute value of the right hand side of (\ref{equation:strongmultoneequation}) can be bounded independently of $p$, which leads to a contradiction for all sufficiently large primes $p$.\end{proof}

We now prove Theorem \ref{thm:strongmultone}.

\begin{proof}
Write $f=E(\chi_1,\chi_2,k)$ and $g=E(\psi_1,\psi_2,k^\prime)$. We begin by proving that $k=k^\prime$. Suppose that $k\neq k^\prime$ and without loss generality that $k>k^\prime$. Because $a_f(p)=\sigma_{\chi_1,\chi_2}^{k-1}(p)=\chi_1(p)+\chi_2(p)p^{k-1}$, it is clear that there exists $\varepsilon_f \in (0,1/2)$ such that for all sufficiently large primes $p$, $|a_f(p)|>p^{k-1-\varepsilon_f}$. Similarly, there exists $\varepsilon_g\in(0,1/2)$ such that for all sufficiently large primes $p$, $|a_g(p)|< p^{k^\prime-1+\varepsilon_g}$. As $\delta(S)>1/2$, we can select a prime $p$ in $S$ such that $a_f(p)=a_g(p)$, $|a_f(p)|>p^{k-1-\varepsilon_f}$ and $|a_g(p)|< p^{k^\prime-1+\varepsilon_g}$. It follows that for such a prime $p$, $p^{k^\prime-1+\varepsilon_g}>p^{k-1-\varepsilon_f}$. On the other hand it is easy to show that $k\geq k^\prime+1$ implies $p^{k-1-\varepsilon_f}>p^{k^\prime-1+\varepsilon_g}$. This contradiction proves that $k=k^\prime$.

We now show that $\chi_1=\psi_1$ and $\chi_2=\psi_2$. Denote by $\chi_1^\prime,\chi_2^\prime, \psi_1^\prime, \psi_2^\prime$ the induced Dirichlet characters modulo $NN^\prime$. Let $p_1,\dots,p_{\varphi(NN^\prime)}$ represent the residue classes of $(\Z/NN^\prime\Z)^\times$ and assume that all of the primes $p_i$ are large enough so that Lemma \ref{lemma:strongmultonelemma} holds with $c=1$. If $a_f(p_i)=a_g(p_i)$ then, by Lemma \ref{lemma:strongmultonelemma}, $\chi_1^\prime(p_i)=\psi_1^\prime(p_i)$ and $\chi_2^\prime(p_i)=\psi_2^\prime(p_i)$. If this occurs for $s$ values of $i$, then the density of primes $p$ for which $\chi_1(p)+\chi_2(p)p^{k-1}=\psi_1(p)+\psi_2(p)p^{k-1}$ is at most $\frac{s}{\varphi(NN^\prime)}$. It follows that this occurs for more than $\frac{\varphi(NN^\prime)}{2}$ values of $i$. The orthogonality relations now show that $\chi_1^\prime\bar{\psi_1^\prime}$ is the principal character, hence the primitive characters $\chi_1,\psi_1$ inducing $\chi_1^\prime,\psi_1^\prime$ are equal as well. An identical argument shows that $\chi_2=\psi_2$. 

Having shown that $\chi_1 = \psi_1$ and $\chi_2 = \psi_2$ we observe that $N = \cond(\chi_1) \cdot \cond(\chi_2) = \cond(\psi_1) \cdot \cond(\psi_2) = N^\prime$ and $\chi_f = \chi_1\chi_2 = \psi_1\psi_2 = \chi_g$, hence $f = E(\chi_1,\chi_2,k) = E(\psi_1,\psi_2,k^\prime) = g$.\end{proof}

It is easy to see that Theorem \ref{thm:strongmultone} is best possible in the sense that there exist distinct newforms whose Hecke eigenvalues differ at a set of primes having density equal to $1/2$. Examples of this form may be constructed by considering a newform $f\in E_k(N,\chi_f)$ and its twist by a quadratic character of conductor relatively prime to $N$ (the fact that such a character twist will still be a newform follows from the results of \cite[Section 5]{linowitz-atwill}). In light of this, it is natural to ask if twisting by a quadratic character is the only way produce examples of this form. We address this question in the following theorem.

\begin{thm}\label{thm:quadtwists}
Let $f\in E_k(N,\chi_f)$ and $g\in E_{k^\prime}(N^\prime,\chi_g)$ be distinct newforms such that $$a_f(p)=a_g(p)$$ for a set $S$ of primes with $\delta(S)=1/2$. Then there exists a quadratic character $\theta$ such that $$a_f(p)=\theta(p)a_g(p)$$ for all primes $p\nmid NN^\prime$.
\end{thm}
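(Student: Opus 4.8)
The plan is to follow the architecture of the proof of Theorem \ref{thm:strongmultone}, but to squeeze what we can out of the weaker hypothesis $\delta(S) = 1/2$. Write $f = E(\chi_1,\chi_2,k)$ and $g = E(\psi_1,\psi_2,k')$. First I would show $k = k'$: the eigenvalue size estimates $|a_f(p)| > p^{k-1-\varepsilon_f}$ and $|a_g(p)| < p^{k'-1+\varepsilon_g}$ used in Theorem \ref{thm:strongmultone} only require \emph{one} sufficiently large prime in $S$, and since $\delta(S) = 1/2 > 0$ there are infinitely many such primes; the same contradiction as before forces $k = k'$.

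Next, pass to the induced characters $\chi_1',\chi_2',\psi_1',\psi_2'$ modulo $M := NN'$ and partition the primes by their residue class in $(\Z/M\Z)^\times$. For each residue class $a$, the set of primes $p \equiv a \pmod M$ has density $1/\varphi(M)$ by Dirichlet, and on each such class the values $\chi_i'(p),\psi_i'(p)$ are constant. Apply Lemma \ref{lemma:strongmultonelemma} with $c = 1$ to each class: for all but finitely many primes $p$ in $S$, the equation $\chi_1'(p) + \chi_2'(p)p^{k-1} = \psi_1'(p) + \psi_2'(p)p^{k-1}$ forces $\chi_1'(p) = \psi_1'(p)$ and $\chi_2'(p) = \psi_2'(p)$. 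Hence the set of residue classes on which $(\chi_1',\chi_2')$ and $(\psi_1',\psi_2')$ agree has total density at least $\delta(S) = 1/2$, i.e. consists of at least $\varphi(M)/2$ classes. Let $G \le (\Z/M\Z)^\times$ be the subgroup on which $\chi_1'\bar\psi_1' = \chi_2'\bar\psi_2' = \mathbf{1}$; then $[(\Z/M\Z)^\times : G] \le 2$. Since $\theta := \chi_1'\bar\psi_1'$ is a Dirichlet character trivial on $G$, it is either trivial or a quadratic character (the unique nontrivial character of the quotient of order $2$). In the trivial case $\chi_1 = \psi_1$ and $\chi_2 = \psi_2$, which by the conductor argument in Theorem \ref{thm:strongmultone} would give $f = g$, contradicting distinctness; so $\theta$ is genuinely quadratic, and likewise $\chi_2'\bar\psi_2' = \theta$ (both equal the unique nontrivial character killing $G$, noting $\chi_i',\psi_i'$ are quadratic so $\bar\psi_i' = \psi_i'$, hence $\chi_1'\psi_1' = \chi_2'\psi_2'$).

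Now fix a prime $p \nmid M$; I want $a_f(p) = \theta(p)a_g(p)$, i.e. $\chi_1(p) + \chi_2(p)p^{k-1} = \theta(p)\psi_1(p) + \theta(p)\psi_2(p)p^{k-1}$. If $p \in G$'s underlying classes we have $\chi_1(p) = \psi_1(p)$, $\chi_2(p) = \psi_2(p)$ and $\theta(p) = 1$, so both sides coincide; if $p$ lies in the nontrivial coset then $\theta(p) = -1$ and $\chi_1'(p)\psi_1'(p) = -1$, i.e. $\chi_1(p) = -\psi_1(p) = \theta(p)\psi_1(p)$, and similarly $\chi_2(p) = \theta(p)\psi_2(p)$, so again the identity holds. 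Finally I would lift $\theta$ from a character mod $M$ to a genuine (primitive) quadratic character, which only changes it at primes dividing $M$ and so does not affect the claimed identity for $p \nmid NN'$. The main obstacle is the bookkeeping in the middle step: one must argue carefully that "agreement on a density-$1/2$ set of primes" translates into "agreement on an index-$\le 2$ subgroup of $(\Z/M\Z)^\times$", and that the resulting coset character $\theta$ is simultaneously the discrepancy for $\chi_1$ vs. $\psi_1$ and for $\chi_2$ vs. $\psi_2$; this uses the orthogonality relations together with the fact that all four characters are quadratic.
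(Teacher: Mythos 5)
Your proposal follows the same architecture as the paper's own proof: apply Lemma \ref{lemma:strongmultonelemma} with $c=1$ to convert agreement of Hecke eigenvalues on a density-$1/2$ set of primes into agreement of the induced characters on at least half of the residue classes of $(\Z/NN^\prime\Z)^\times$, deduce that $\theta_1=\chi_1^\prime\bar{\psi_1^\prime}$ and $\theta_2=\chi_2^\prime\bar{\psi_2^\prime}$ are trivial on a subgroup of index at most $2$ (equivalently, by orthogonality, each is trivial or quadratic), rule out triviality using the distinctness of $f$ and $g$, identify $\theta_1=\theta_2$, and read off the twisted identity at all $p\nmid NN^\prime$. Making the reduction to $k=k^\prime$ explicit is a sensible addition (the paper leaves it implicit), and you are right that it needs only one sufficiently large prime of $S$, which exists since $\delta(S)=1/2>0$.

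There is, however, a genuine gap at the point where you dispose of the trivial case. From the triviality of $\theta:=\chi_1^\prime\bar{\psi_1^\prime}$ you may conclude only that $\chi_1=\psi_1$; the further assertion ``and $\chi_2=\psi_2$'' does not follow, so the configuration in which exactly one of $\theta_1,\theta_2$ is trivial and the other is quadratic is never excluded by your argument. This configuration is not vacuous, and in it the asserted conclusion genuinely fails: take $\psi_1=\chi_1$ and let $\psi_2$ be the primitive character inducing $\chi_2\theta_2$, where $\theta_2$ is a nontrivial even quadratic character of conductor coprime to $N$ (so that $g=E(\psi_1,\psi_2,k)$ is again a newform of weight $k$). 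Then $f$ and $g$ are distinct and $a_f(p)=a_g(p)$ precisely on a set of primes of density $1/2$ (essentially $\{p:\theta_2(p)=1\}$), yet for any quadratic $\theta$ the relation $a_f(p)=\theta(p)a_g(p)$ at a prime $p\nmid NN^\prime$ with $\theta_2(p)=-1$ would force $\chi_2(p)=0$ (if $\theta(p)=1$) or $\chi_1(p)=0$ (if $\theta(p)=-1$), both impossible. So the mixed case is exactly the crux, and it cannot be folded into the ``trivial'' case as you do. Be aware that the paper's own proof also passes over this point quickly, asserting that principality of either $\theta_1$ or $\theta_2$ already forces $f=g$; any complete write-up must either justify that assertion or restrict the statement (for instance, by requiring that the pairs $(\chi_1,\chi_2)$ and $(\psi_1,\psi_2)$ differ in both components, i.e.\ that neither $\theta_1$ nor $\theta_2$ is principal), after which your subgroup argument for $\theta_1=\theta_2$ goes through.
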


\begin{proof}
By Lemma \ref{lemma:strongmultonelemma}, there exists a subset $S^\prime$ of $S$ such that $\delta(S^\prime)=1/2$ and, for all primes $p\in S^\prime$, we have $\chi_1(p)=\psi_1(p)$ and $\chi_2(p)=\psi_2(p)$. Let $i\in\{ 1,2\}$ and consider $\theta_i=\chi_i\bar{\psi_i}$. From above, $\theta_i(n)=1$ for $1/2$ of the elements in $\left(\Z/NN^\prime\Z\right)^\times$. If either $\theta_1$ or $\theta_2$ is principal, then our hypothesis that $a_f(p)=a_g(p)$ implies $f=g$, which would be a contradiction. Otherwise the orthogonality relations show that $\theta_i(n)=-1$ whenever $\theta_i(n)\neq 1$. Therefore, $\theta_1$ and $\theta_2$ are quadratic characters. Moreover, it must be the case that $\theta_1=\theta_2$, since $\theta_1(p)=1=\theta_2(p)$ for all primes $p\in S^\prime$, hence both must assume the value $-1$ on all primes $p\nmid NN^\prime$ that lie in the complement of $S^\prime$. The theorem follows.\end{proof}

Having shown that an Eisenstein series newform is uniquely determined by its Hecke eigenvalues for any set of primes with density greater than $1/2$, we now show that in fact a stronger statement is true: an Eisenstein series newform is uniquely determined by the \textit{signs} of Hecke eigenvalues for any set of primes with density greater than $1/2$. Here, as usual, we adopt the convention that the sign $\mathrm{sgn}(z)$ of a nonzero complex number $z$ is equal to $\frac{z}{|z|}$ (i.e. $\mathrm{sgn}(z)$ is the point on the complex unit circle closest to $z$). This result complements Theorem 4 of Kowalksi, Lau, Soundararajan and Wu \cite{kowalskietal}, which shows that a similar result holds for cuspidal newforms.

\begin{thm}\label{thm:sgnmultone}
Let $f\in E_k(N,\chi_f)$ and $g\in E_k(N^\prime,\chi_g)$ be newforms such that $$\mathrm{sgn}(a_f(p))=\mathrm{sgn}(a_g(p))$$ for a set $S$ of primes with $\delta(S)>1/2$. Then $N=N^\prime$, $\chi_f=\chi_g$ and $f=g$.
\end{thm}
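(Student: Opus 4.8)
The plan is to adapt the argument behind Theorem~\ref{thm:strongmultone}, recovering from the sequence of signs first the ``dominant'' character and then the ``lower order'' one. Write $f=E(\chi_1,\chi_2,k)$ and $g=E(\psi_1,\psi_2,k)$ with all four characters primitive. For a prime $p\nmid NN^\prime$,
\[
a_f(p)=\chi_1(p)+\chi_2(p)p^{k-1}=\chi_2(p)\,p^{k-1}\bigl(1+\chi_1(p)\overline{\chi_2(p)}\,p^{1-k}\bigr),
\]
and since $k\geq2$ the factor $1+\chi_1(p)\overline{\chi_2(p)}p^{1-k}$ lies in the closed disc of radius $\tfrac12$ about $1$, so it has real part at least $\tfrac12$. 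Hence $\mathrm{sgn}(a_f(p))=\chi_2(p)\,w_f(p)$, where $w_f(p):=\mathrm{sgn}\bigl(1+\chi_1(p)\overline{\chi_2(p)}p^{1-k}\bigr)$ satisfies $|\arg w_f(p)|\leq\arcsin(p^{1-k})$; likewise $\mathrm{sgn}(a_g(p))=\psi_2(p)\,w_g(p)$ with $w_g(p)=\mathrm{sgn}\bigl(1+\psi_1(p)\overline{\psi_2(p)}p^{1-k}\bigr)$ and $|\arg w_g(p)|\leq\arcsin(p^{1-k})$.

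First I would recover $\chi_2$. For $p\in S$ with $p\nmid NN^\prime$ the hypothesis gives $\chi_2(p)\overline{\psi_2(p)}=w_g(p)\overline{w_f(p)}$; the left side is a root of unity whose order divides $M:=\lcm(\ord\chi_1,\ord\chi_2,\ord\psi_1,\ord\psi_2)$, while the argument of the right side is bounded in absolute value by $2\arcsin(p^{1-k})\to0$. Since $1$ is isolated among the $M$-th roots of unity, $\chi_2(p)=\psi_2(p)$ for all but finitely many $p\in S$, and the resulting set of primes still has density exceeding $1/2$. Passing to the characters induced modulo $NN^\prime$ and invoking the orthogonality relations as in the proof of Theorem~\ref{thm:strongmultone}, we conclude that the primitive characters $\chi_2$ and $\psi_2$ agree.

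With $\chi_2=\psi_2$ in hand, those same primes satisfy $w_f(p)=w_g(p)$; equivalently, $1+\chi_1(p)\overline{\chi_2(p)}p^{1-k}$ and $1+\psi_1(p)\overline{\chi_2(p)}p^{1-k}$ have the same argument, and, since both lie in the right half-plane, clearing the common factor $p^{1-k}$ turns this into
\[
\mathrm{Im}\bigl(\chi_1(p)\overline{\chi_2(p)}\bigr)-\mathrm{Im}\bigl(\psi_1(p)\overline{\chi_2(p)}\bigr)=O\bigl(p^{1-k}\bigr).
\]
As imaginary parts of $M$-th roots of unity form a finite set, the left side vanishes for all large $p\in S$; comparing real parts then yields $\chi_1(p)\overline{\chi_2(p)}=\psi_1(p)\overline{\chi_2(p)}$, hence $\chi_1(p)=\psi_1(p)$, for every such $p$ outside the set on which $\chi_1(p)\overline{\chi_2(p)}$ is real. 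On that set the relation $\bigl(\chi_1(p)\overline{\chi_2(p)}\bigr)^2=1$ confines $p$ to a fixed subgroup of $(\Z/NN^\prime\Z)^\times$, so a density comparison with $\delta(S)>1/2$---together, if necessary, with Theorems~\ref{thm:strongmultone} and~\ref{thm:quadtwists}---still leaves $\chi_1(p)=\psi_1(p)$ on a set of primes of density greater than $1/2$, and orthogonality forces $\chi_1=\psi_1$. Finally $N=\cond(\chi_1)\cond(\chi_2)=\cond(\psi_1)\cond(\psi_2)=N^\prime$, $\chi_f=\chi_1\chi_2=\psi_1\psi_2=\chi_g$, and $f=E(\chi_1,\chi_2,k)=E(\psi_1,\psi_2,k)=g$.

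The main obstacle is this last step. The sign of $a_f(p)$ determines $\chi_2(p)$ outright but detects $\chi_1(p)$ only through a rotation of size $O(p^{1-k})$, so extracting $\chi_1$ requires uniform control on the separation of the $M$-th roots of unity from $1$ and---more delicately---a careful treatment of the primes on which $\chi_1(p)\overline{\chi_2(p)}=\pm1$, where the sign condition degenerates. It is precisely there that one must exploit the strict inequality $\delta(S)>1/2$ rather than merely $\delta(S)>0$.
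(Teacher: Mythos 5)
Your opening step---factoring $a_f(p)=\chi_2(p)p^{k-1}\bigl(1+\chi_1(p)\overline{\chi_2(p)}p^{1-k}\bigr)$ and recovering $\chi_2=\psi_2$ from the near-coincidence of roots of unity---is correct, and it is a genuinely different route from the paper's: the paper instead sets $\varepsilon_p:=a_f(p)/a_g(p)$, which is a positive real number by the sign hypothesis, uses Lemma \ref{lemma:signs} together with the bound $2p^{1-k}<\varepsilon$ to force $\varepsilon_p=1$ on a subset $S^\prime\subseteq S$ of density greater than $1/2$, and then simply invokes Theorem \ref{thm:strongmultone}.

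The gap is in your final step, and it is not a removable technicality. Under the paper's standing convention (Section \ref{section:prelim}) the characters $\chi_1,\chi_2,\psi_1,\psi_2$ are quadratic, so $\chi_1(p)\overline{\chi_2(p)}=\pm 1$ for every $p\nmid NN^\prime$: your ``degenerate set'' is the set of all primes, the subgroup to which it ``confines'' $p$ is all of $(\Z/NN^\prime\Z)^\times$, and the proposed density comparison yields nothing. Indeed, by Lemma \ref{sgnlemma} the sign of $a_f(p)$ is exactly $\chi_2(p)$, so in the real-coefficient setting the sign sequence carries no information about $\chi_1$ at all, and appealing to Theorems \ref{thm:strongmultone} and \ref{thm:quadtwists} cannot help, since equality of eigenvalues (rather than of signs) has not been established on any set of primes. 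Concretely, let $\chi_{3}$ and $\chi_{5}$ denote the quadratic characters modulo $3$ and $5$; then $E(\mathbf{1},\chi_3,3)\in E_3(3,\chi_3)$ and $E(\chi_5,\chi_3,3)\in E_3(15,\chi_5\chi_3)$ are distinct newforms of the same weight with $\mathrm{sgn}(a_f(p))=\chi_3(p)=\mathrm{sgn}(a_g(p))$ for every prime $p\nmid 15$, so no argument can extract $\chi_1(p)=\psi_1(p)$ on a set of density greater than $1/2$ from the sign data alone. (This same degenerate configuration---$\chi_2(p)=\psi_2(p)$ with $\varepsilon_p\neq 1$---is precisely the case in which the paper's own appeal to Lemma \ref{lemma:signs} produces no contradiction, since that lemma requires the two unit-circle points to be distinct; so the obstacle you flagged at the end of your sketch is intrinsic to the statement rather than an artifact of your particular route, and it cannot be patched by the density hypothesis.)
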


\begin{rmk}
As was the case with Theorem \ref{thm:strongmultone}, our Theorem \ref{thm:sgnmultone} is best possible. Indeed, if $f\in E_k(N,\chi_f)$ is a newform and $\theta$ is a primitive quadratic Dirichlet character whose conductor is relatively prime to $N$, then the twist $f_\theta$ of $f$ by $\theta$ will be a newform whose Hecke eigenvalues have the same sign as those of $f$ for a set of primes of density equal to $1/2$ (the set of primes $p$ for which $\theta(p)=1$).
\end{rmk}

Our proof of Theorem \ref{thm:sgnmultone} will make use of the following lemma, which follows immediately from the case where $m = 2$.

\begin{lemma}\label{lemma:signs}
Let $z_1,\dots,z_m$ be distinct complex numbers lying on the unit circle. Then there exists an $\varepsilon>0$ such that for any positive real number $r$ and $i\neq j$ we have $$|rz_i-z_j|>\varepsilon.$$
\end{lemma}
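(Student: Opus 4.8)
The plan is to follow the reduction indicated in the statement: it suffices to treat the case $m=2$. Indeed, if for each ordered pair $(i,j)$ with $i\neq j$ we can produce a constant $\varepsilon_{ij}>0$ with $|rz_i-z_j|>\varepsilon_{ij}$ for every real $r>0$, then, since there are only finitely many such pairs, $\varepsilon:=\min_{i\neq j}\varepsilon_{ij}$ is a single positive number that works for all of them simultaneously.

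So I would fix two distinct points $z_1,z_2$ on the unit circle and study the continuous function $\phi\colon[0,\infty)\to\mathbf{R}$ given by $\phi(r)=|rz_1-z_2|$. First, $\phi$ is strictly positive on all of $[0,\infty)$: if $\phi(r)=0$ then $rz_1=z_2$, so $r=r|z_1|=|z_2|=1$ and hence $z_1=z_2$, contrary to hypothesis. Next, the reverse triangle inequality gives $\phi(r)\geq r|z_1|-|z_2|=r-1$, so $\phi(r)\geq 1$ whenever $r\geq 2$. On the compact interval $[0,2]$ the continuous positive function $\phi$ attains a positive minimum $\mu>0$, whence $\delta:=\inf_{r\geq 0}\phi(r)=\min(\mu,1)>0$. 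Taking $\varepsilon_{12}:=\delta/2$ then yields $|rz_1-z_2|\geq\delta>\varepsilon_{12}$ for every $r\geq 0$, in particular for every $r>0$. (Geometrically, $\{rz_1:r\geq 0\}$ is the closed ray from the origin through $z_1$, and $\delta$ is simply the Euclidean distance from $z_2$ to that ray, which is positive precisely because distinctness keeps $z_2$ off the ray.)

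There is no genuine obstacle here; the only two points that warrant a little care are keeping the inequality strict — which is why one passes from the attained infimum $\delta$ down to $\delta/2$ rather than using $\delta$ itself — and observing that the endpoint $r=0$ causes no trouble (since $|z_2|=1>0$), so that one may work on the closed half-line $[0,\infty)$ and invoke compactness cleanly, with no case analysis on the sign of $r$ or on the angle between $z_1$ and $z_2$.
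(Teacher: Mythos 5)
Your proof is correct and follows the same route the paper intends: the paper offers no written argument beyond the remark that the lemma follows immediately from the case $m=2$, and your reduction to pairs followed by the compactness/ray-distance argument for two points supplies exactly those details. One tiny inaccuracy: the claimed identity $\inf_{r\geq 0}|rz_1-z_2|=\min(\mu,1)$ should really be the inequality $\inf_{r\geq 0}|rz_1-z_2|\geq\min(\mu,1)$, but since only positivity of the infimum is used, nothing is affected.
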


We now prove Theorem \ref{thm:sgnmultone}. 

\begin{proof}
Write $f=E(\chi_1,\chi_2,k)$ and $g=E(\psi_1,\psi_2,k^\prime)$. Let $n_1,\dots,n_{\varphi(NN^\prime)}$ represent the residue classes of $(\Z/NN^\prime\Z)^\times$ and $\varepsilon$ be the constant from Lemma \ref{lemma:signs} applied to the set $$\{ \chi_i(n_j) : i\in\{1,2\}, 1\leq j\leq \varphi(NN^\prime) \} \bigcup \{ \psi_i(n_j) : i\in\{1,2\}, 1\leq j\leq \varphi(NN^\prime) \}.$$
Let $S^\prime\subset S$ be the subset of $S$ consisting of primes $p$ for which $2p^{1-k}<\varepsilon$ and $p>NN^\prime$. Note that $\delta(S^\prime)=\delta(S)>1/2$. 

For each prime $p\in S^\prime$, define a positive real number $\varepsilon_p:=\frac{a_f(p)}{a_g(p)}=\frac{|a_f(p)|}{|a_g(p)|}$. We claim that $\varepsilon_p=1$ for all primes $p\in S^\prime$. If $\varepsilon_p\neq 1$ for some prime $p$, then by interchanging $f$ and $g$ (if necessary) we may assume $\varepsilon_p<1$. By definition of $\varepsilon_p$ we have, $$\chi_1(p)+\chi_2(p)p^{k-1}=\varepsilon_p\psi_1(p)+\varepsilon_p\psi_2(p)p^{k-1}.$$ From this identity, it follows that
\begin{align*}
|\varepsilon_p\psi_2(p)-\chi_2(p)|&=\frac{|\chi_1(p)-\varepsilon_p\psi_1(p)|}{p^{k-1}}\\ &\leq \frac{1+\varepsilon_p}{p^{k-1}}\\ &< 2p^{1-k} \\ &<\varepsilon.\end{align*}

This contradicts Lemma \ref{lemma:signs}, hence $\varepsilon_p=1$. Theorem \ref{thm:sgnmultone} now follows from Theorem \ref{thm:strongmultone}.\end{proof}

We conclude this section by proving that up to twisting by a Dirichlet character, an Eisenstein series newform is determined by the $n^{th}$ powers of its Hecke eigenvalues for any set of primes with density greater than $1/2$.

\begin{thm}\label{thm:nthpowers}
Let $f\in E_k(N,\chi_f)$ and $g\in E_{k^\prime}(N^\prime,\chi_g)$ be newforms such that for some integer $n\geq 2$, $$a_f(p)^n=a_g(p)^n$$ holds for a set $S$ of primes with $\delta(S)>1/2$. Then $k=k^\prime$ and there exists a Dirichlet character $\theta$ with $\cond(\theta) \mid NN^\prime$ such that $\chi_f=\theta^2\chi_g$ and $$a_f(p)=\theta(p)a_g(p)$$ for all primes $p\nmid NN^\prime$.
\end{thm}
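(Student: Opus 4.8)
The plan is to follow the template of the proofs of Theorems \ref{thm:strongmultone} and \ref{thm:sgnmultone}, replacing the single comparison constant $c$ there by a finite case analysis over the $n$-th roots of unity. First I would establish $k=k^\prime$ exactly as in Theorem \ref{thm:strongmultone}: writing $a_f(p)=\chi_1(p)+\chi_2(p)p^{k-1}$ and $a_g(p)=\psi_1(p)+\psi_2(p)p^{k^\prime-1}$, there are $\varepsilon_f,\varepsilon_g\in(0,1/2)$ with $|a_f(p)|>p^{k-1-\varepsilon_f}$ and $|a_g(p)|<p^{k^\prime-1+\varepsilon_g}$ for all large $p$; raising these to the $n$-th power and choosing a sufficiently large prime $p\in S$ (possible since $\delta(S)>1/2>0$) would force $p^{n(k^\prime-1+\varepsilon_g)}>p^{n(k-1-\varepsilon_f)}$, which is impossible once $k\geq k^\prime+1$ and $\varepsilon_f,\varepsilon_g$ are small. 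Hence $k=k^\prime$, and I write $g=E(\psi_1,\psi_2,k)$ from now on.

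Next, for $p\nmid NN^\prime$ we have $|a_g(p)|\geq p^{k-1}-1>0$, so the hypothesis $a_f(p)^n=a_g(p)^n$ says exactly that $c_p:=a_f(p)/a_g(p)$ is an $n$-th root of unity, i.e. $a_f(p)=c_p\,a_g(p)$. Now apply Lemma \ref{lemma:strongmultonelemma} with $M=NN^\prime$, once for each of the finitely many $c\in\mu_n$, and let $p_0$ be the largest of the resulting constants, enlarged if necessary so that $p_0\geq NN^\prime$ and the growth estimates above hold for $p>p_0$. Put $S^\prime=\{p\in S: p>p_0\}$, so $\delta(S^\prime)=\delta(S)>1/2$ and every $p\in S^\prime$ satisfies $p\nmid NN^\prime$. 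For each $p\in S^\prime$, Lemma \ref{lemma:strongmultonelemma} applied with $c=c_p$ gives $\chi_1(p)=c_p\psi_1(p)$ and $\chi_2(p)=c_p\psi_2(p)$.

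I would then pass to induced characters modulo $NN^\prime$: let $\theta_1=\chi_1^\prime\overline{\psi_1^\prime}$ and $\theta_2=\chi_2^\prime\overline{\psi_2^\prime}$, which are genuine Dirichlet characters modulo $NN^\prime$. The previous step shows $\theta_1(p)=c_p=\theta_2(p)$ for all $p\in S^\prime$, so $\theta_1\overline{\theta_2}$ equals $1$ on $S^\prime$, a set of primes of density exceeding $1/2$; since the primes in each residue class modulo $NN^\prime$ have density $1/\varphi(NN^\prime)$, the kernel of $\theta_1\overline{\theta_2}$ has index $1$, i.e. $\theta_1=\theta_2=:\theta$. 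Consequently, for every prime $p\nmid NN^\prime$ we get $\chi_1(p)=\chi_1^\prime(p)=\theta(p)\psi_1^\prime(p)=\theta(p)\psi_1(p)$ and likewise $\chi_2(p)=\theta(p)\psi_2(p)$, so $a_f(p)=\theta(p)\bigl(\psi_1(p)+\psi_2(p)p^{k-1}\bigr)=\theta(p)a_g(p)$; comparing nebentypes gives $\chi_f=\chi_1\chi_2=\theta^2\psi_1\psi_2=\theta^2\chi_g$ (as characters modulo $NN^\prime$), and taking $\theta$ primitive yields $\cond(\theta)\mid NN^\prime$.

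The one genuinely new point — and the step I would be most careful about — is that the factor relating $a_f(p)$ and $a_g(p)$ is no longer a fixed constant but varies with $p$. The resolution is that it ranges over the \emph{finite} set $\mu_n$, so a single $p_0$ suffices in Lemma \ref{lemma:strongmultonelemma}, and the pointwise values $c_p$ are then recognized as the values of an honest Dirichlet character via the orthogonality/equidistribution argument above (this is where $\delta(S)>1/2$ is used, just as in Theorem \ref{thm:strongmultone}). Everything else is routine bookkeeping with conductors and induced characters, parallel to the proofs of Theorems \ref{thm:strongmultone} and \ref{thm:sgnmultone}.
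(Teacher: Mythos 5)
Your proposal is correct, and its first half (the reduction $k=k^\prime$, the observation that $a_g(p)\neq 0$ for $p\nmid NN^\prime$ so that $c_p=a_f(p)/a_g(p)$ is an $n$-th root of unity, and the application of Lemma \ref{lemma:strongmultonelemma} uniformly over the finitely many elements of $\mu_n$ to get $\chi_1(p)=c_p\psi_1(p)$, $\chi_2(p)=c_p\psi_2(p)$ on a set $S^\prime$ with $\delta(S^\prime)=\delta(S)$) coincides with the paper's argument. Where you diverge is the endgame: the paper sets $\theta=\chi_1\bar{\psi_1}$, notes that on $S^\prime$ the twist $g_\theta$ has the same Hecke eigenvalues as $f$, invokes the fact (from the Eisenstein newform/twisting theory of \cite{linowitz-atwill}) that $g_\theta$ is equivalent to a newform, and then applies Theorem \ref{thm:strongmultone} to identify that newform with $f$, which delivers $\chi_f=\theta^2\chi_g$ and $a_f(p)=\theta(p)a_g(p)$ in one stroke. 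You instead prove directly that the two induced characters $\chi_1^\prime\bar{\psi_1^\prime}$ and $\chi_2^\prime\bar{\psi_2^\prime}$ agree, by observing both equal $c_p$ on $S^\prime$ and then running the kernel/density (orthogonality) argument that appears inside the proof of Theorem \ref{thm:strongmultone}; the conclusion is then read off from $\chi_i(p)=\theta(p)\psi_i(p)$ for all $p\nmid NN^\prime$. Your route is more self-contained --- it needs neither the twisting machinery nor Theorem \ref{thm:strongmultone} as a black box in the final step --- and it is closer in spirit to the proof of Theorem \ref{thm:quadtwists}; the paper's route is shorter on the page because strong multiplicity one absorbs the bookkeeping of nebentypus and eigenvalues simultaneously. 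The only caveat, which you already flag, is that the equality $\chi_f=\theta^2\chi_g$ should be read as an equality of the characters induced modulo $NN^\prime$ (equivalently, of the associated primitive characters), which is also the sense in which the paper's conclusion holds.
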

\begin{proof}
As before, write $f=E(\chi_1,\chi_2,k)$ and $g=E(\psi_1,\psi_2,k^\prime)$. The fact that $k=k^\prime$ can be proven using the ideas employed in the proof of the corresponding result in Theorem \ref{thm:strongmultone}. We therefore omit this proof.

If $p\nmid NN^\prime$ then assign, for each prime $p\in S$, the $n$th root of unity $\varepsilon_p$ such that $a_f(p)=\varepsilon_p a_g(p)$. By Lemma \ref{lemma:strongmultonelemma}, there exists a subset $S^\prime$ of $S$ such that $\delta(S^\prime)=\delta(S)$ and, for all $p\in S^\prime$, we have $\chi_1(p)=\varepsilon_p\psi_1(p)$ and $\chi_2(p)=\varepsilon_p\psi_2(p)$. 

Let $\theta=\chi_1\bar{\psi_1}$ and consider the twist $g_\theta$ of $g$ by $\theta$. Since $g_\theta$ is a simultaneous Hecke eigenform for all primes $p\nmid NN^\prime$, there exists a newform equivalent to $g_\theta$. It follows from Theorem \ref{thm:strongmultone} that this newform is $f$, hence $\chi_f=\theta^2\chi_g$. Let $p\nmid NN^\prime$ be prime. Then computing the $p$th Hecke eigenvalues of $g_\theta$ and $f$ shows that $$\theta(p)(\psi_1(p)+\psi_2(p)p^{k-1})=\chi_1(p)+\chi_2(p)p^{k-1},$$ which finishes our proof.
\end{proof}

The following is an immediate consequence of Theorem \ref{thm:nthpowers}.

\begin{corollary}
Let the notation be as in Theorem \ref{thm:nthpowers}. If $\chi_f=\chi_g$, then $\theta$ is quadratic and $a_f(p)^2=a_g(p)^2$ for all primes $p\nmid NN^\prime$.
\end{corollary}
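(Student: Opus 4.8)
The corollary follows directly from Theorem \ref{thm:nthpowers}, and the plan is to simply read off the consequences when one imposes the additional hypothesis $\chi_f = \chi_g$. First I would invoke Theorem \ref{thm:nthpowers} to obtain the Dirichlet character $\theta$ with $\cond(\theta) \mid NN^\prime$ satisfying $\chi_f = \theta^2 \chi_g$ and $a_f(p) = \theta(p) a_g(p)$ for all primes $p \nmid NN^\prime$. Now impose $\chi_f = \chi_g$: combined with $\chi_f = \theta^2 \chi_g$, cancellation gives $\theta^2 = \mathbf{1}$ (the principal character modulo $NN^\prime$), i.e. $\theta$ is a quadratic character.

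Having shown $\theta$ is quadratic, it remains to deduce $a_f(p)^2 = a_g(p)^2$ for all $p \nmid NN^\prime$. This is immediate: squaring the identity $a_f(p) = \theta(p) a_g(p)$ yields $a_f(p)^2 = \theta(p)^2 a_g(p)^2$, and since $\theta$ is quadratic we have $\theta(p)^2 = 1$ whenever $(p, NN^\prime) = 1$ (and in particular whenever $p \nmid NN^\prime$), so $a_f(p)^2 = a_g(p)^2$ as claimed.

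There is essentially no obstacle here; the only point requiring a word of care is that $\theta^2$ being the principal character of modulus $NN^\prime$ forces $\theta$ itself to be quadratic (its order divides $2$), which is a standard fact about characters on $(\Z/NN^\prime\Z)^\times$. One might also remark that a \emph{primitive} character inducing $\theta$ is then a primitive quadratic character, matching the phrasing used elsewhere in the paper, though this refinement is not needed for the statement as written.
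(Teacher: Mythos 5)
Your argument is correct and is exactly the deduction the paper intends: the corollary is stated as an immediate consequence of Theorem \ref{thm:nthpowers}, and your reading-off of $\theta^2$ principal (hence $\theta$ of order dividing $2$) from $\chi_f=\theta^2\chi_g=\theta^2\chi_f$, followed by squaring $a_f(p)=\theta(p)a_g(p)$ with $\theta(p)^2=1$ for $p\nmid NN^\prime$, is the intended proof. Nothing further is needed.
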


\section{Eisenstein series with rational and non-negative Fourier coefficients}

\subsection{The field of Fourier coefficients of an Eisenstein newform}

Let $f\in S_k(N,\chi)$ be a cuspidal newform, let $\Q(f)$ be the field generated by its Fourier coefficients and recall that $\Q(f)$ is either a totally real number field or a CM field (that is, a totally imaginary quadratic extension of a totally real number field) \cite[Proposition 3.2]{ribet}. We shall show that the same is true for the field of Fourier coefficients of an Eisenstein newform. Our proof will rely on the following two basic properties of CM fields, the second of which is an easy consequence of the first.

\begin{lemma}\label{lemma:cm1}\cite[Proposition 5.11]{shimura-book}
Let $F$ be a number field and $\rho\in\Aut(\C)$ be complex conjugation. Then $F$ is a CM field if and only if
\begin{enumerate}
\item $\rho$ induces a nontrivial automorphism of $F$.
\item Every isomorphism $\sigma$ of $F$ into $\C$ commutes with $\rho$.
\end{enumerate}	
\end{lemma}

\begin{lemma}\label{lemma:cm2}
Let $F$ be a CM field and $F^\prime$ be a subfield of $F$. Then $F^\prime$ is either a totally real number field or a CM field.
\end{lemma}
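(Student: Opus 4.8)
The plan is to deduce the statement directly from Lemma~\ref{lemma:cm1} by verifying its two characterizing conditions for the subfield $F'$. Throughout I regard all fields as subfields of $\C$, and I write $\rho\in\Aut(\C)$ for complex conjugation, as in Lemma~\ref{lemma:cm1}. Note first that $F'$ is automatically a number field, being contained in the number field $F$.

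The first and, I expect, only non-routine step is to show that $F'$ is stable under $\rho$, i.e. $\rho(F')=F'$. Since $F$ is a CM field we have $\rho(F)=F$, so $\rho(x)\in F$ for every $x\in F'$. Suppose, for contradiction, that $\rho(x)\notin F'$ for some $x\in F'$. Then the minimal polynomial of $\rho(x)$ over $F'$ has degree at least $2$, hence (we are in characteristic $0$) at least two distinct roots, and the isomorphism extension theorem produces an embedding $\sigma:F\hookrightarrow\C$ with $\sigma|_{F'}=\mathrm{id}$ but $\sigma(\rho(x))\neq\rho(x)$. On the other hand, condition (2) of Lemma~\ref{lemma:cm1} says $\sigma$ commutes with $\rho$, so $\sigma(\rho(x))=\rho(\sigma(x))=\rho(x)$ because $x\in F'$ — a contradiction. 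Hence $\rho(F')\subseteq F'$, and $\rho^2=\mathrm{id}$ gives $\rho(F')=F'$. The point worth flagging is that an arbitrary subfield of a number field need not be stable under a prescribed automorphism; it is precisely the rigidity built into condition (2) that forces stability here.

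Having established stability, I would split into two cases according to whether $\rho$ acts trivially on $F'$. If $\rho|_{F'}=\mathrm{id}$, then $F'\subseteq\mathbf{R}$, and for any embedding $\tau:F'\hookrightarrow\C$ I extend $\tau$ to an embedding $\tilde\tau:F\hookrightarrow\C$; condition (2) applied to $\tilde\tau$ gives, for every $y\in F'$, $\rho(\tau(y))=\rho(\tilde\tau(y))=\tilde\tau(\rho(y))=\tilde\tau(y)=\tau(y)$, so $\tau(F')\subseteq\mathbf{R}$ and $F'$ is totally real. If instead $\rho|_{F'}\neq\mathrm{id}$, I verify the two conditions of Lemma~\ref{lemma:cm1} for $F'$: condition (1) is exactly the assertion that $\rho$ induces a nontrivial automorphism of $F'$, which is the combination of the stability step with the case hypothesis; and for condition (2), given any embedding $\tau:F'\hookrightarrow\C$ I again extend it to $\tilde\tau:F\hookrightarrow\C$, use that $\tilde\tau$ commutes with $\rho$ on $F$, and restrict this identity to $F'$ — legitimate because $\rho(F')=F'$, so $\rho$ does not move us out of $F'$ — to conclude that $\tau$ commutes with $\rho$. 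Thus $F'$ is a CM field, which completes the proof.
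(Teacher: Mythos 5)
Your proof is correct and follows exactly the route the paper intends: the paper gives no argument for Lemma~\ref{lemma:cm2}, merely noting it is an easy consequence of Lemma~\ref{lemma:cm1}, and your verification of conditions (1) and (2) for $F'$ (including the key stability step $\rho(F')=F'$, forced by condition (2) applied to $F$) is a sound filling-in of that omitted argument.
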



\begin{prop}\label{prop:heckefield}
Let $E=E(\chi_1,\chi_2,k)\in E_k(N,\chi)$ be a newform and $\Q(E)$ be the field obtained by adjoining to $\Q$ all of the Fourier coefficients of $E$.
\begin{enumerate}
	\item $\Q(E)$ has finite degree over $\Q$.
	\item $\Q(E)$ is a Galois extension of $\Q$ with abelian Galois group.
	\item $\Q(E)$ contains $\chi(n)$ for all $n\geq 1$.
	\item $\Q(E)$ is either a totally real number field or a CM field.
\end{enumerate}
\end{prop}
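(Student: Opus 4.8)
The plan is to prove that every Fourier coefficient of $E$ lies in the cyclotomic field $\Q(\zeta_{N_1N_2})$, after which parts (1), (2) and (4) follow formally from standard properties of cyclotomic fields, while (3) needs only a short computation with the Hecke relations. For $n\geq 1$ the coefficient $a_E(n)=\sigma_{\chi_1,\chi_2}^{k-1}(n)=\sum_{d\mid n}\chi_1(n/d)\chi_2(d)d^{k-1}$ is a $\Z$-linear combination of products of values of $\chi_1$ and $\chi_2$, hence lies in $\Q(\zeta_{N_1},\zeta_{N_2})\subseteq\Q(\zeta_{N_1N_2})$. The constant term $a_E(0)=\frac{\delta(\chi_1)}{2}L(1-k,\chi_2)$ vanishes unless $\chi_1$ is principal, in which case primitivity forces $N_1=1$ and $a_E(0)=-\frac{B_{k,\chi_2}}{2k}$; by the classical rationality of the generalized Bernoulli number $B_{k,\chi_2}$ this value lies in the field generated by the values of $\chi_2$, hence in $\Q(\zeta_{N_2})$. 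This establishes $\Q(E)\subseteq\Q(\zeta_{N_1N_2})$.

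Parts (1) and (2) are then immediate: $\Q(\zeta_{N_1N_2})/\Q$ is a finite abelian extension, and every subfield of a finite abelian extension of $\Q$ is again a finite Galois extension of $\Q$ with abelian Galois group, since subgroups of an abelian group are normal and quotients of abelian groups are abelian. For part (3) I would use that for a prime $p\nmid N$ one has $a_E(p)^2-a_E(p^2)=\chi(p)p^{k-1}$ (the $r=1$ instance of the Hecke recursion $a_E(p^{r+1})=a_E(p)a_E(p^r)-\chi(p)p^{k-1}a_E(p^{r-1})$, checked directly from the divisor sums with $a=\chi_1(p)$, $b=\chi_2(p)p^{k-1}$). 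Since $a_E(p),a_E(p^2)\in\Q(E)$ and $p^{k-1}\in\Q$, we get $\chi(p)\in\Q(E)$ for every $p\nmid N$; because $\chi$ is multiplicative and vanishes on integers not coprime to $N$, it follows that $\chi(n)\in\Q(E)$ for all $n\geq 1$.

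For part (4), if $N_1N_2\leq 2$ then $\Q(E)=\Q$, which is totally real. Otherwise $\Q(\zeta_{N_1N_2})$ is a CM field: complex conjugation induces a nontrivial automorphism, and since $\Q(\zeta_{N_1N_2})/\Q$ is abelian this automorphism is central and hence commutes with every embedding into $\C$, so Lemma \ref{lemma:cm1} applies. As $\Q(E)$ is a subfield of the CM field $\Q(\zeta_{N_1N_2})$, Lemma \ref{lemma:cm2} shows that $\Q(E)$ is either a totally real number field or a CM field.

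I do not expect a genuine obstacle here; the argument is essentially routine. The only step relying on an external fact is the control of the constant term in the cyclotomic containment, which rests on the classical rationality of $L(1-k,\chi_2)$ (equivalently, of the generalized Bernoulli numbers). The only place a small verification is required is the identity $a_E(p)^2-a_E(p^2)=\chi(p)p^{k-1}$ used for part (3), and that is a one-line expansion.
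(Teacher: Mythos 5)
Your proof is correct and takes essentially the same approach as the paper: establish that the Fourier coefficients lie in a cyclotomic field (giving (i), (ii), and, via Lemma \ref{lemma:cm2}, (iv)) and use the identity $a_E(p)^2=a_E(p^2)+\chi(p)p^{k-1}$ for (iii); your use of multiplicativity in place of the paper's appeal to primes representing every residue class, and your explicit treatment of the constant term, are only cosmetic differences. One shared (and harmless) imprecision: values of a character modulo $N_i$ need not lie in $\Q(\zeta_{N_i})$ (a character of order $4$ modulo $5$ takes the value $i\notin\Q(\zeta_5)$), so the containment should read $\Q(E)\subseteq\Q(\zeta_M)$ with, say, $M=\varphi(N_1)\varphi(N_2)$; since the argument only uses membership in some cyclotomic field, nothing downstream changes.
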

\begin{proof}
The formula for the Fourier coefficients of $E_{\chi_1,\chi_2}$ makes it clear that $\Q(E)$ is contained in $\Q(\zeta_N)$. This proves the first two assertions. Assertion (iii) follows from the identity $a_E(p)^2=a_E(p^2)+\chi(p)p^{k-1}$ and the fact that every residue class of $(\Z/N\Z)^\times$ is represented by a prime. We have already noted that $\Q(E)$ is contained in a cyclotomic field. If the latter field is totally real (hence equal to $\Q$) then $\Q(E)$ is totally real as well. Otherwise this cyclotomic field is a CM field and assertion (iv) follows from Lemma \ref{lemma:cm2}.
\end{proof}

The following is an immediate consequence of Proposition \ref{prop:heckefield}.

\begin{prop}\label{prop:rationalcoefficients}
Suppose that a newform $E\in E_k(N,\chi)$ has rational Fourier coefficients and is not identically zero. Then $\chi$ is either trivial or quadratic.
\end{prop}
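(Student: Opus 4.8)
The plan is to deduce Proposition \ref{prop:rationalcoefficients} directly from assertion (iii) of Proposition \ref{prop:heckefield}. Since $E$ has rational Fourier coefficients, $\Q(E) = \Q$, and assertion (iii) tells us that $\chi(n) \in \Q$ for every $n \geq 1$. So the entire problem reduces to the following elementary fact about Dirichlet characters: a Dirichlet character all of whose values are rational must be trivial or quadratic.

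To prove this elementary fact, I would argue as follows. The values of a Dirichlet character $\chi$ modulo $N$ are roots of unity (for $n$ coprime to $N$) or zero; in particular every value lies in a cyclotomic field. If all values are rational, then each nonzero value is a rational root of unity, hence equal to $\pm 1$. Thus $\chi(n) \in \{-1, 0, 1\}$ for all $n$, which means $\chi^2(n) = \chi(n)^2 \in \{0,1\}$ for all $n$, i.e. $\chi^2$ is the principal character modulo $N$. Therefore $\chi$ has order dividing $2$, so $\chi$ is either the trivial character or a (possibly imprimitive) quadratic character. One can phrase this even more cleanly: the order of $\chi$ equals the order of the image of $\chi$ in $\C^\times$, which is a finite cyclic group of roots of unity; if it contains only rational numbers it has order at most $2$.

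I expect no serious obstacle here; the only point requiring a modicum of care is making sure the reduction is legitimate, namely that "rational Fourier coefficients" genuinely forces $\chi(n) \in \Q$. This is exactly the content of Proposition \ref{prop:heckefield}(iii), whose proof in turn rests on the identity $a_E(p)^2 = a_E(p^2) + \chi(p)p^{k-1}$ together with the surjectivity of primes onto $(\Z/N\Z)^\times$. Granting that, the write-up is just a couple of sentences: $\Q(E) = \Q$ contains all $\chi(n)$, each nonzero $\chi(n)$ is a rational root of unity hence $\pm 1$, so $\chi^2$ is principal and $\chi$ is trivial or quadratic. I would also remark that when $k=2$ and $N=1$ the degenerate case is excluded by the standing hypothesis on $E(\chi_1,\chi_2,k)$, so the statement is vacuously fine there; no further case analysis is needed.
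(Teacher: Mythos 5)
Your proposal is correct and follows the same route as the paper: both deduce from Proposition \ref{prop:heckefield}(iii) that $\Q(E)=\Q$ forces all values $\chi(n)$ to be rational, hence (being roots of unity or zero) equal to $0$ or $\pm 1$, so $\chi$ is trivial or quadratic. Your write-up merely spells out the elementary final step that the paper leaves implicit.
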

\begin{proof}
It is clear that for such an $E$ we have $\Q(E)=\Q$. Proposition \ref{prop:heckefield} now shows that $\chi$ must be real-valued. The proof follows. \end{proof}

\begin{rmk}
An alternative proof of Proposition \ref{prop:rationalcoefficients} can be obtained by arguing that the $p^{th}$ Fourier coefficient $a_E(p)=\chi_1(p)+\chi_2(p)p^{k-1}$ of $E(\chi_1,\chi_2,k)$ is rational if and only if both $\chi_1(p)$ and $\chi_2(p)$ are rational (equivalently, $a_E(p)$ is a real number if and only if $\chi_1(p)$ and $\chi_2(p)$ are real numbers and thus lie in the set $\{0,\pm 1\}$). 
The proof then follows from the fact that $\chi=\chi_1\chi_2$.
\end{rmk}

Our next goal is to extend Proposition \ref{prop:rationalcoefficients} to arbitrary Eisenstein series. Before doing so we require some terminology.

Let $\sigma\in\Aut(\C)$ and $\chi$ be a character on a finite abelian group $G$. We will denote by $\chi^\sigma$ the character $$\chi^\sigma: G\rightarrow \C^\times,\qquad\qquad \chi^\sigma(g)=\sigma(\chi(g)).$$ 

It is well-known that, given an Eisenstein series $f\in E_k(N,\chi)$ and $\sigma\in \Aut(\C)$, there exists an Eisenstein series $f^\sigma\in E_k(N,\chi^\sigma)$ such that $a_{f^\sigma}(n)=\sigma(a_f(n))$ for all $n\geq 0$.

\begin{lemma}\label{lemma:autotwist}
Let $\sigma\in \Aut(\C)$ and $f\in E_k(N,\chi)$ be a newform. Then $f^\sigma\in E_k(N,\chi^\sigma)$ is a newform.
\end{lemma}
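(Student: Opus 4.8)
The plan is to identify $f^\sigma$ explicitly as another member of the Weisinger basis. Write $f = E(\chi_1,\chi_2,k)$; by Weisinger's classification $\chi_1,\chi_2$ are primitive of conductors $N_1,N_2$ with $N = N_1N_2$ and $\chi = \chi_1\chi_2$. I claim that $f^\sigma = E(\chi_1^\sigma,\chi_2^\sigma,k)$ and that the right-hand side is a newform in $E_k(N,\chi^\sigma)$.

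First I would record the elementary fact that $\sigma$ restricts to a group automorphism of the roots of unity in $\C$: being a field automorphism it fixes $1$ and is injective, so $\chi_i^\sigma(g)=1 \iff \chi_i(g)=1$, whence $\ker\chi_i^\sigma = \ker\chi_i$. Consequently $\chi_i^\sigma$ is again primitive of conductor $N_i$, $\chi_i^\sigma$ is principal precisely when $\chi_i$ is, and
\[
\chi_1^\sigma\chi_2^\sigma(-1) = \sigma\big(\chi_1\chi_2(-1)\big) = \sigma\big((-1)^k\big) = (-1)^k .
\]
Since $f$ itself is a genuine modular form we are not in the excluded case where $\chi_1,\chi_2$ are both principal modulo $1$ and $k=2$, so $E(\chi_1^\sigma,\chi_2^\sigma,k)$ is a well-defined Eisenstein series, and by the description of the Weisinger basis it is a newform lying in $E_k(\cond(\chi_1^\sigma)\cond(\chi_2^\sigma),\,\chi_1^\sigma\chi_2^\sigma) = E_k(N,\chi^\sigma)$.

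Next I would match Fourier coefficients. Since $\sigma$ is a ring homomorphism fixing $\Z$, for every $n\ge 1$
\[
\sigma\big(a_f(n)\big) = \sigma\Big(\sum_{d\mid n}\chi_1(n/d)\chi_2(d)d^{k-1}\Big) = \sum_{d\mid n}\chi_1^\sigma(n/d)\chi_2^\sigma(d)d^{k-1} = \sigma_{\chi_1^\sigma,\chi_2^\sigma}^{k-1}(n),
\]
which is exactly $a_{E(\chi_1^\sigma,\chi_2^\sigma,k)}(n)$. Thus $f^\sigma$ and $E(\chi_1^\sigma,\chi_2^\sigma,k)$, both in $E_k(N,\chi^\sigma)\subset M_k(N,\chi^\sigma)$, have identical $q$-expansions in every positive degree; their difference is a constant modular form of weight $k\ge 2$, hence $0$. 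Therefore $f^\sigma = E(\chi_1^\sigma,\chi_2^\sigma,k)$ is a newform. (If one prefers to match the constant terms directly, one uses $\delta(\chi_1)=\delta(\chi_1^\sigma)$ together with the classical identity $L(1-k,\chi_2) = -B_{k,\chi_2}/k$ and $\sigma(B_{k,\chi_2}) = B_{k,\chi_2^\sigma}$, which follows from $B_{k,\chi_2} = N_2^{k-1}\sum_{a=1}^{N_2}\chi_2(a)B_k(a/N_2)$ with $B_k$ having rational coefficients.)

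There is no serious obstacle here; the only point that requires care is the preservation of primitivity and of the conductor under the Galois action, which is precisely what allows one to invoke Weisinger's classification of the newforms of $E_k(N,\chi^\sigma)$. Everything else is a direct computation together with the standard fact that a constant modular form of weight $k\ge 2$ vanishes.
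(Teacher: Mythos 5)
Your proposal is correct and follows essentially the same route as the paper: identify $f^\sigma$ with $E(\chi_1^\sigma,\chi_2^\sigma,k)$ by applying $\sigma$ to the Fourier coefficients $\sum_{d\mid n}\chi_1(n/d)\chi_2(d)d^{k-1}$, and conclude it is a newform of level $N$ and character $\chi^\sigma$. The extra details you supply (preservation of primitivity and conductor under $\sigma$, the parity check, and the constant-term comparison) are exactly the routine points the paper leaves implicit, so nothing further is needed.
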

\begin{proof}
It is easy to see that if $f=E(\chi_1,\chi_2,k)$ then $f^\sigma=E(\chi_{1}^\sigma,\chi_{2}^\sigma,k)$:

\begin{align*}
a_{E(\chi_{1},\chi_{2},k)^\sigma}(n)=\sigma(a_{E(\chi_{1},\chi_{2},k)}(n)) &= \sigma\left(\sum_{d\mid n} \chi_{1}(n/d)\chi_{2}(d) d^{k-1}\right)	 \\ 
&= \sum_{d\mid n} \chi_{1}^\sigma(n/d)\chi_{2}^\sigma(d) d^{k-1}\\
&= a_{E(\chi_{1}^\sigma,\chi_{2}^\sigma,k)}(n).
\end{align*}
\end{proof}

Let $f\in E_k(N,\chi)$ be a newform with field of Fourier coefficients $\Q(f)$ and let $G=\Gal(\Q(f)/\Q)$. Given $\alpha\in\Q(f)$, we define the \emph{trace} $\Tr(\alpha f)$ of $\alpha f$ as $$\Tr(\alpha f)=\sum_{\sigma\in G} \sigma(\alpha)f^\sigma.$$ It follows immediately from the definition that the Fourier coefficients of $\Tr(\alpha f)$ are rational numbers. We also remark that if $\Q(f)\neq \Q$ then $\Tr(\alpha f)\in E_k(N,\chi)$ if and only if $\chi$ is real-valued (otherwise $\Tr(\alpha f)$ simply lies in the larger space $E_k(\Gamma_1(N))$).

\begin{thm}\label{thm:rationalcoefficients}
If $E\in E_k(N,\chi)$ has rational Fourier coefficients and is not identically zero, then $\chi$ is either trivial or quadratic. Moreover, $E$ can be written uniquely as a sum of shifts of traces of newforms in $E_k(N,\chi)$ and rational multiples of newforms $E(\chi_1,\chi_2,k)\in E_k(N,\chi)$, where $\chi_1$ and $\chi_2$ are real-valued Dirichlet characters.
\end{thm}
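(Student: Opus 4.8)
The plan is to leverage Weisinger's newform decomposition of $E_k(N,\chi)$ together with the Galois-equivariance established in Lemma \ref{lemma:autotwist}. First, the claim that $\chi$ is trivial or quadratic: since $E$ has rational Fourier coefficients, $\chi$ must be real-valued. Indeed, write $E = \sum_{i} c_i\, (E_i)_{d_i}$ as a finite $\C$-linear combination of shifts $(E_i)_{d_i}(z) = E_i(d_i z)$ of Eisenstein newforms $E_i = E(\chi_1^{(i)},\chi_2^{(i)},k)$ of level $M_i \mid N$, with $d_i \mid NM_i^{-1}$ — this basis exists by Weisinger's theory. Applying $\sigma \in \Aut(\C)$ and using Lemma \ref{lemma:autotwist}, we get $E = E^\sigma = \sum_i \sigma(c_i)\,(E_i^\sigma)_{d_i}$, where each $E_i^\sigma = E(\chi_1^{(i)\,\sigma},\chi_2^{(i)\,\sigma},k)$ is again a newform (of the same level, since conductors are preserved by $\sigma$). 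By the linear independence of the basis of shifts of newforms, the multiset of pairs $\{(E_i, d_i, c_i)\}$ must be permuted by $\sigma$; in particular $\sigma$ permutes the set of newforms $\{E_i\}$ appearing with nonzero coefficient. Since $\chi = \chi_1^{(i)}\chi_2^{(i)}$ for whichever $E_i$ has $d_i = 1$ and full level... more carefully: the character of $E$ is $\chi$, and $E^\sigma$ has character $\chi^\sigma$, but $E^\sigma = E$ forces $\chi^\sigma = \chi$ for all $\sigma$, hence $\chi$ is rational-valued, hence trivial or quadratic (as in Proposition \ref{prop:rationalcoefficients}).

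For the decomposition statement, I group the newforms appearing in the expansion of $E$ into Galois orbits. Fix a newform $E_0 = E(\chi_1,\chi_2,k)$ with nonzero coefficient $c_0$ and shift $d_0$. If $\chi_1,\chi_2$ are both real-valued, then $E_0$ has coefficients in a field that is... actually its coefficients $\chi_1(n/d)\chi_2(d)d^{k-1} \in \Q$, so $E_0$ is its own Galois orbit; I would like to show the coefficient $c_0$ is then rational. If instead $\chi_1$ or $\chi_2$ is not real-valued, then $E_0$ has a nontrivial Galois orbit $\{E_0^\sigma : \sigma \in G\}$ where $G = \Gal(\Q(E_0)/\Q)$, and I would show that, after matching shifts, the contribution of this orbit to $E$ must be a rational multiple of $\Tr(\alpha E_0)_{d_0}$ for a suitable $\alpha$. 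The mechanism: since $\sigma$ permutes the terms of $E$ preserving shifts, and within a fixed shift $d$ it permutes the newforms and correspondingly permutes their coefficients via $c \mapsto \sigma^{-1}(\cdot)$ appropriately, the sub-sum $\sum_{\sigma \in G} c_{0,\sigma}(E_0^\sigma)_{d_0}$ is $\Aut(\C)$-stable; writing $c_{0,\mathrm{id}} = \alpha$ and tracking the permutation action forces $c_{0,\sigma} = \sigma(\alpha)$ for all $\sigma$, so this sub-sum equals $(\Tr(\alpha E_0))_{d_0}$ by definition of $\Tr$. Summing over orbit representatives and over shifts gives the asserted form; uniqueness follows from the linear independence of the shifted-newform basis over $\C$ (a rational decomposition into traces and rational multiples, when expanded back into the $\C$-basis, recovers the original coefficients uniquely).

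The main obstacle I anticipate is bookkeeping the interaction between the Galois action and the shifting operation: one must verify that $\sigma$ sends a shift $(E_i)_{d_i}$ to $(E_i^\sigma)_{d_i}$ with the \emph{same} $d_i$ (immediate, since $q \mapsto q^{d_i}$ is defined over $\Q$) and that distinct shifts of distinct newforms remain linearly independent after applying $\sigma$ — this is where Weisinger's basis theorem is essential. A secondary subtlety is ensuring that when $\chi$ is quadratic (nontrivial), the traces $\Tr(\alpha E_i)$ and the rational multiples $c\,E(\chi_1,\chi_2,k)$ actually lie in $E_k(N,\chi)$ rather than merely in $E_k(\Gamma_1(N))$; this is exactly the remark preceding the theorem — since $\chi$ is real-valued, $\Tr(\alpha E_i) \in E_k(N,\chi)$, and the newforms with both $\chi_1,\chi_2$ real and $\chi_1\chi_2 = \chi$ are the ones whose Galois orbit is trivial, so the two types of summand exhaust all orbits. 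Once these compatibility checks are in place, the decomposition and its uniqueness follow formally from linear algebra over the $\C$-basis of shifted newforms.
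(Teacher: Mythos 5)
Your proposal follows essentially the same route as the paper's proof: write $E$ uniquely as a linear combination of shifts of newforms, apply $\sigma\in\Aut(\C)$ together with Lemma \ref{lemma:autotwist} and the uniqueness of the decomposition to see that $\sigma$ permutes the summands (forcing $\chi^\sigma=\chi$, hence $\chi$ trivial or quadratic), then conclude that coefficients of newforms with both characters real-valued are rational while each nontrivial Galois orbit contributes a shifted trace. The one detail worth making explicit (as the paper does) is that each coefficient $c_i$, a priori only a nonzero complex number, actually lies in $\Q(E_i)$ --- obtained by applying $\tau\in\Aut(\C/\Q(E_i))$, which fixes $E_i$, and invoking uniqueness to get $\tau(c_i)=c_i$ --- so that $\Tr(c_iE_i)$ is well-defined.
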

\begin{proof}
Fix an Eisenstein series $E\in E_k(N,\chi)$ with rational Fourier coefficients. Then $E$ has a unique decomposition \begin{equation}\label{equation:decomp}E=\sum_{i=1}^r c_i \left( E_i\mid B_{d_i}\right) \end{equation} as a linear combination of shifts of newforms. Here the $c_i$ are nonzero complex numbers, the $d_i$ are divisors of $N$ and $E_i=E(\chi_{i,1},\chi_{i,2},k)$ is a newform of level $N/d_i$ and character $\chi$.

Consider one of the summands $c_i \left( E_i\mid B_{d_i}\right)$ in equation (\ref{equation:decomp}). 

Suppose first that $E_i$ has rational Fourier coefficients. This is equivalent to assuming that $\chi_{i,1}$ and $\chi_{i,2}$ are real-valued. In this case if $\sigma\in \Aut(\C)$ then $E_i^\sigma=E_i$, hence $\left( E_i\mid B_{d_i}\right)=\left( E_i^\sigma\mid B_{d_i}\right)$. The uniqueness of the decomposition in equation (\ref{equation:decomp}) shows that $\sigma(c_i)=c_i$. Since $\sigma$ was an arbitrary element of $\Aut(\C)$, we conclude that $c_i\in\Q$.

Now suppose that $E_i$ does not have all rational Fourier coefficients or equivalently, $\chi_{i,1}$ and $\chi_{i,2}$ are not both real-valued and $\Q(E_i)$ is a nontrivial extension of $\Q$. By Proposition \ref{prop:heckefield}, the field $\Q(E_i)$ is a Galois extension of $\Q$. Let $\F$ be the compositum of the fields $\Q(E_i)$ as $i$ ranges over $1,\dots,r$ and let $G$ be the Galois group of $\F$ over $\Q$. 

We claim that $c_i$ (which \textit{a priori} is only known to be a nonzero complex number) is an element of $\Q(E_i)$. Indeed, if $\tau\in\Aut(\C/\Q(E_i))$, then $E_i^\tau=E_i$. That $\tau(c_i)=c_i$ now follows from the uniqueness of the decomposition in equation (\ref{equation:decomp}).

Note that, if $\sigma\in G$, then $E^\sigma=E\in E_k(N,\chi)$ because the Fourier coefficients of $E$ are rational numbers. On the other hand $E^\sigma=\sum_{i=1}^r \sigma(c_i) \left( E_i^\sigma \mid B_{d_i}\right)$, hence $E_i^\sigma\in E_k(N,\chi)$ for $i=1,\dots,r$. It follows that $\chi^\sigma=\chi$ for all $\sigma\in G$. We therefore conclude that $\chi$ is real-valued. Because the coefficients of $E_i$ are not all rational numbers, there exists $\sigma\in G$ such that $E_i^\sigma\neq E_i$. The uniqueness of the representation of $E$ as a linear combination of shifts of newforms shows that there exists $j\in 1,\dots,r$ such that $E_i^\sigma=E_j$ and $\sigma(c_i)=c_j$. It follows that the shift by $B_{d_i}$ of $\Tr(c_i E_i)$ appears in equation (\ref{equation:decomp}). \end{proof}

\subsection{Eisenstein series with non-negative Fourier coefficients}

In light of Lemma \ref{sgnlemma}, it is clear that all of the Fourier coefficients of $E(\chi_0,\chi,k)$ are non-negative rational numbers. Standard estimates for the growth of the Fourier coefficients of Eisenstein series now imply that one may construct an Eisenstein series with non-negative Fourier coefficients by choosing an arbitrary element of $E_k(N,\chi)$ with rational Fourier coefficients and adding to it a sufficiently large multiple of $E(\chi_0,\chi,k)$. The main result of this section is that, up to shifting by the $B_d$ operator, all Eisenstein series in $E_k(N,\chi)$ with non-negative, rational Fourier coefficients arise in this manner. Our proof will make use of the following easy lemma.

\begin{lemma}\label{lemma:negative}
Let $\psi_1,\dots,\psi_m$ be non-principal Dirichlet characters whose moduli all divide a positive integer $N\geq 2$ and $c_1,\dots,c_m$ be nonzero complex numbers so that $f(n):=c_1\psi_1(n)+\cdots + c_m\psi_m(n)$ has its image lying in the rational numbers. If $f$ is not identically zero then there exist infinitely many primes $p$ such that $f(p)<0$.
\end{lemma}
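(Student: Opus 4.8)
The plan is to argue by contradiction, using nothing more than the orthogonality relation for non-principal Dirichlet characters together with the fact (already invoked earlier in the paper) that every residue class coprime to $N$ is represented by infinitely many primes.

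The first step is to put everything over the common modulus $N$: replace each $\psi_i$ by the Dirichlet character modulo $N$ that it induces from its own modulus, say $N_i\mid N$. This changes neither the values $f(p)$ at primes $p\nmid N$ nor, by linear independence of the distinct characters involved, the hypothesis that $f$ is not identically zero. After this normalization $f$ is an $N$-periodic, $\Q$-valued (in particular real-valued) function on $\Z$ that vanishes on every residue class not prime to $N$; and since the reduction map $(\Z/N\Z)^\times\to(\Z/N_i\Z)^\times$ is surjective, each induced $\psi_i$ remains non-principal, so $\sum_{a\in(\Z/N\Z)^\times}\psi_i(a)=0$ for every $i$ and therefore
\[
\sum_{a\in(\Z/N\Z)^\times} f(a)=\sum_{i=1}^m c_i\sum_{a\in(\Z/N\Z)^\times}\psi_i(a)=0.
\]

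Now suppose, toward a contradiction, that only finitely many primes $p$ satisfy $f(p)<0$, so that $f(p)\geq 0$ for every sufficiently large prime $p$. Because $f(p)$ depends only on $p$ modulo $N$, and because each invertible class modulo $N$ contains arbitrarily large primes, choosing one such prime in each class gives $f(a)\geq 0$ for all $a\in(\Z/N\Z)^\times$. A sum of non-negative real numbers that equals zero must be termwise zero, so the displayed identity forces $f(a)=0$ for every $a\in(\Z/N\Z)^\times$; combined with the vanishing of $f$ off the invertible residues, this says $f\equiv 0$, contradicting the hypothesis. Hence there are infinitely many primes $p$ with $f(p)<0$.

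I do not anticipate a genuine obstacle: once the setup is in place, the argument is essentially a one-line averaging computation. The only points requiring a little care are the opening bookkeeping — verifying that inducing the $\psi_i$ up to the common modulus $N$ preserves both non-principality (hence the orthogonality relation) and the non-vanishing of $f$ — and the invocation of Dirichlet's theorem on primes in arithmetic progressions, of which only the weak form used elsewhere in the paper (each residue class coprime to $N$ contains a prime) is actually needed.
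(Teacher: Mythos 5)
Your route is essentially the paper's: orthogonality forces the average of $f$ over a period to vanish, non-vanishing of $f$ then produces a residue class on which $f$ is negative, and primes in that class finish the argument. On one point you are in fact more careful than the paper: by inducing to modulus $N$ and summing only over $(\Z/N\Z)^\times$ you ensure that the negative value occurs in a class \emph{coprime} to $N$, which is what Dirichlet's theorem actually requires (the paper's proof takes $n\in\{1,\dots,N\}$ with $f(n)<0$ and invokes ``every prime $p\equiv n\pmod N$'' without checking $(n,N)=1$). Note, however, that your closing remark understates what you use: the contradiction argument needs arbitrarily large primes in \emph{every} coprime class, i.e.\ the full strength of Dirichlet's theorem, not merely one prime per class.

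The genuine gap is in your normalization step. Distinct $\psi_i$ with different moduli can induce the \emph{same} character modulo $N$, so after inducing, their coefficients combine and may cancel; ``linear independence of the distinct characters involved'' therefore does not justify the claim that the induced $f$ is still not identically zero. This is not a removable technicality: take $\psi_1$ the quadratic character mod $3$, $\psi_2$ the character mod $6$ it induces, $c_1=1$, $c_2=-1$, $N=6$. Then $f=\psi_1-\psi_2$ is rational-valued and not identically zero (it equals $-1$ on $n\equiv 2\pmod 6$ and $+1$ on $n\equiv 4\pmod 6$), yet it vanishes on all $n$ coprime to $6$, and $f(p)<0$ holds only for the single prime $p=2$. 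So the lemma as literally stated fails in this configuration, and any proof must break somewhere: yours breaks at the ``induced $f$ is nonzero'' claim, while the paper's breaks at the quoted step, since there are only finitely many primes $p\equiv n\pmod N$ when $(n,N)>1$. Both arguments become correct under the additional hypothesis that $f$ does not vanish identically on the integers coprime to $N$ (equivalently, that after grouping, each induced character mod $N$ appears with nonzero coefficient); this is the situation the paper implicitly relies on in the proof of Theorem \ref{thm:nonnegative}, where the relevant characters are primitive (distinct primitive characters induce distinct characters mod $N$) and $f_2$ is first argued to be nonzero at a prime before the lemma is applied.
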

\begin{proof}
By the orthogonality relations we know that $\sum_{n=1}^N c_i\psi_i(n)=0$ for $i=1,\dots,m$, hence $\sum_{n=1}^N f(n)=0$. It follows that if $f$ is not identically zero then $f(n)<0$ for some $n\in \{1,\dots,N\}$ and, therefore, for every prime $p\equiv n\pmod {N}$ as well.\end{proof}

\begin{thm}\label{thm:nonnegative}
Let $E\in E_k(N,\chi)$ be a nonzero Eisenstein series with rational Fourier coefficients and suppose that no shift of $E(\chi_0,\chi,k)$ appears in the newform decomposition of $E$. Then $E$ possesses negative Fourier coefficients of arbitrarily large absolute value.
\end{thm}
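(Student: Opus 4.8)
The plan is to restrict attention to the part of $E$ supported on the smallest shift, pass to prime-indexed Fourier coefficients there, and exploit that the term $\chi_{i,2}(p)p^{k-1}$ dominates $\chi_{i,1}(p)$. By Proposition \ref{prop:rationalcoefficients} $\chi$ is trivial or quadratic, and by Weisinger's newform theory we may write the unique decomposition $E=\sum_i c_i\,(E_i\mid B_{d_i})$ with $c_i\in\C^\times$, $d_i\mid N$, $E_i=E(\chi_{i,1},\chi_{i,2},k)$ a newform, and the pairs $(E_i,d_i)$ distinct. Put $d^\ast=\min_i d_i$ and $F=\sum_{i:\,d_i=d^\ast}c_iE_i$, a nonzero combination of \emph{distinct} newforms, none of which is the all-positive newform $E(\chi_0,\chi,k)$ by hypothesis. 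The first observation is that $a_E(d^\ast p)=a_F(p)$ for every prime $p\nmid N$: in $a_E(d^\ast p)=\sum_i c_i\,a_{E_i\mid B_{d_i}}(d^\ast p)$ a summand survives only when $d_i\mid d^\ast p$, and since $\gcd(p,N)=1$ and $d_i\mid N$ this forces $d_i\mid d^\ast$, hence $d_i=d^\ast$ by minimality, and then $a_{E_i\mid B_{d^\ast}}(d^\ast p)=a_{E_i}(p)$. So it suffices to make $a_F(p)$ large and negative for suitable primes $p$.

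For $p\nmid N$ I would write $a_F(p)=\sum_{i:\,d_i=d^\ast}c_i\bigl(\chi_{i,1}(p)+\chi_{i,2}(p)p^{k-1}\bigr)=A(p)+B(p)\,p^{k-1}$, where $|A(p)|\le\sum_i|c_i|$ and $B(p)=\sum_i c_i\chi_{i,2}(p)$ depends only on $p\bmod N$, defining a function $B\colon(\Z/N\Z)^\times\to\C$. Two properties of $B$ matter. It is not identically zero: distinct newforms have distinct characters $\chi_{i,2}$ (since $\chi_{i,1}$ is the primitive character inducing $\chi\overline{\chi_{i,2}}$), so the $\chi_{i,2}\bmod N$ are linearly independent while the $c_i$ are nonzero. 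And it is rational-valued, because for two primes $p\equiv q\pmod N$ one has $B(p)(p^{k-1}-q^{k-1})=a_F(p)-a_F(q)=a_E(d^\ast p)-a_E(d^\ast q)\in\Q$. The crux is then that $B$ cannot be $\ge 0$ everywhere on $(\Z/N\Z)^\times$: if it were, then being rational, nonnegative and nonzero it would satisfy $\frac{1}{\varphi(N)}\sum_n B(n)>0$, and by orthogonality of characters this mean equals the coefficient of the principal character in $B=\sum_i c_i\chi_{i,2}$, so the principal character would occur among the $\chi_{i,2}$. By Lemma \ref{sgnlemma} and multiplicativity, the corresponding newform $E_{j_0}$ would then have all positive Fourier coefficients, i.e.\ $E_{j_0}$ would be the newform $E(\chi_0,\chi,k)$ — contradicting the assumption that no shift of $E(\chi_0,\chi,k)$ occurs in $E$.

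Consequently there is a class $n_0$ coprime to $N$ with $B(n_0)<0$, and by Dirichlet's theorem there are infinitely many primes $p\equiv n_0\pmod N$; for these, $a_E(d^\ast p)=A(n_0)+B(n_0)p^{k-1}\le\sum_i|c_i|+B(n_0)p^{k-1}\to-\infty$, so $E$ has negative Fourier coefficients of arbitrarily large absolute value. I expect the main obstacle to be the two reductions rather than any hard estimate: one must see that prime-indexed coefficients carry no information until the smallest shift $d^\ast$ is stripped off, and then that the only obstruction to the leading-order character sum $B$ assuming a negative value is a nonzero mean, which by orthogonality smuggles the all-positive newform back into the decomposition. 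A minor point to handle carefully is that the individual characters $\chi_{i,j}$ need not be real-valued even though $\chi$ is (the decomposition can involve traces of Galois-conjugate newforms), but this never enters: only the rationality of $B$, which follows directly from that of $E$, is used.
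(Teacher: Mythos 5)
Your proposal is correct and follows essentially the same route as the paper's proof: strip off the minimal shift $d^\ast$ so that $a_E(d^\ast p)$ equals the prime coefficient of the unshifted combination, isolate the dominant character sum $B(p)=\sum_i c_i\chi_{i,2}(p)$, use linear independence of the distinct $\chi_{i,2}$ to see $B\not\equiv 0$, and use orthogonality plus Dirichlet's theorem to force a class where $B<0$, whence $a_E(d^\ast p)\to-\infty$; your contrapositive phrasing (``$B\ge 0$ would force the principal character, i.e.\ the all-positive newform, into the decomposition'') is just Lemma \ref{lemma:negative} combined with the hypothesis. Your explicit verification that $B$ is rational-valued (comparing two primes in the same class mod $N$) is a nice touch that the paper leaves implicit.
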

\begin{proof}
Write $E$ as a linear combination of shifts of newforms of level dividing $N$:

\begin{equation}\label{equation:uniquedecomp}E=\sum_{i=1}^m c_i \left( E_i\mid B_{d_i}\right).\end{equation} 
	
Consider first the special case in which $d_i=1$ for $i=1,\dots,m$. In this case, there exist Dirichlet characters $\chi_1^i,\chi_2^i$ ($i=1,\dots,m$) and complex numbers $c_1,\dots,c_m$ such that:

$$a_E(n)=c_1 \sigma^{k-1}_{\chi^1_1,\chi^1_2}(n)+\cdots + c_m \sigma^{k-1}_{\chi_1^m,\chi_2^m}(n).$$

If $p$ is prime then $\sigma^{k-1}_{\chi_1,\chi_2}(p)=\chi_1(p)+\chi_2(p)p^{k-1}$, hence $a_E(p)=f_1(p)+f_2(p)p^{k-1}$ where $f_1(n)=c_1\chi_1^1(n)+\cdots+c_m\chi_1^m(n)$ and $f_2(n)=c_1\chi_2^1(n)+\cdots+c_m\chi_2^m(n)$. By the linear independence of characters, there exists a prime $q$ such that $f_2(q)\neq 0$. It now follows from Lemma \ref{lemma:negative} that there are infinitely many primes $p$ such that $f_2(p)<0$, and consequently that $a_E(p)<0$ for infinitely many primes. The relation $a_E(p)=f_1(p)+f_2(p)p^{k-1}$ implies that $|a_E(p)|\rightarrow \infty$ as $p\rightarrow \infty$, finishing our proof in this case.

We now consider the general case. Let $d=\min\{d_i: 1\leq i \leq m\}.$ Then there exists an Eisenstein series $E^\prime$ such that $E^\prime\mid B_d$ appears in (\ref{equation:uniquedecomp}) and is maximal in the sense that the newform decomposition of $E-E^\prime\mid B_d$ contains no shifts by the $B_d$ operator. By the previous paragraph, there exist infinitely many primes $p$ such that $a_{E^\prime}(p)<0$. By equation (\ref{equation:uniquedecomp}) and the definition of $E^\prime$, for all but finitely many primes we have $a_E(dp)=a_{E^\prime}(p)$, finishing the proof.\end{proof}


{\bf Acknowledgements:} We would like to thank Steven J. Miller for posing the question answered by Theorem \ref{thm:quadtwists}. We would also like to thank Paul Pollack and Daniel Fiorilli for useful discussions related to their work on the least non-residue problem and prime number races, respectively. We are grateful to Tom Shemanske and Paul Pollack for their comments on an earlier draft of this paper. Finally, we would like to express our gratitude to Micah Milinovich and Dimitris Koukoulopoulos for pointing out some relevant results from the literature.

\providecommand{\bysame}{\leavevmode\hbox
to3em{\hrulefill}\thinspace}
\providecommand{\MR}{\relax\ifhmode\unskip\space\fi MR }
\providecommand{\nMRhref}[2]{%
  \href{http://www.ams.org/mathscinet-getitem?mr=#1}{#2}
} \providecommand{\href}[2]{#2}


\begin{thebibliography}{10}

	\bibitem{atkin-lehner}
	A.~O.~L. Atkin and J.~Lehner.
	\newblock \textit{Hecke operators on {$\Gamma _{0}(m)$}}.
	\newblock Math. Ann. \textbf{185} (1970), 134--160.
	
\bibitem{linowitz-atwill}
T. Atwill and B. Linowitz.
\newblock \textit{Newform theory for Hilbert Eisenstein series}.
\newblock Ramanujan J. \textbf{30} (2013), no. 2, 257--278.

\bibitem{DS} F. Diamond and J. Shurman. {\em A first course in modular forms.} Graduate Texts in Mathematics \textbf{228}, Springer (2005).

\bibitem{erdos} P. Erd\H{o}s, \textit{Remarks on number theory. I}, Mat. Lapok \textbf{12} (1961), 10--17.
	
\bibitem{fiorilli-martin}
D. Fiorilli and G. Martin. {\em Inequities in the Shanks-Renyi prime number race: an asymptotic formula for the densities}. J. Reine Angew. Math. \textbf{676} (2013), 121--212.

\bibitem{kohnen2009}
Y.~Choie and W.~Kohnen.
\newblock \textit{The first sign change of {F}ourier coefficients of cusp forms.}
\newblock Amer. J. Math., \textbf{131} (2009), no. 2, 517--543.

\bibitem{hwangjanson} H.-K. Hwang and S. Janson, \textit{Delange's Tauberian theorem and asymptotic normality of random ordered factorization of integers}. Preprint. 

\bibitem{kohnen2007}
H.~Iwaniec, W.~Kohnen, and J.~Sengupta.
\newblock \textit{The first negative {H}ecke eigenvalue.}
\newblock Int. J. Number Theory, \textbf{3} (2007), no. 3, 355--363.


\bibitem{kohnen2008}
W.~Kohnen, Y.-K. Lau, and I.~E. Shparlinski.
\newblock \textit{On the number of sign changes of {H}ecke eigenvalues of newforms.}
\newblock J. Aust. Math. Soc., \textbf{85} (2008), no. 1, 87--94.

\bibitem{kohnen2006}
W.~Kohnen and J.~Sengupta.
\newblock \textit{On the first sign change of {H}ecke eigenvalues of newforms.}
\newblock Math. Z., \textbf{254} (2006), no. 1, 173--184.


\bibitem{kowalskietal} E.~Kowalski, Y.-K. Lau, K.~Soundararajan, and J.~Wu, \textit{On modular signs}. Math. Proc. Cambridge Philos. Soc. \textbf{149} (2010), no. 3, 389--411.

\bibitem{li} W. Li, {\em Newforms and functional equations. } Math. Ann. \textbf{212} (1975), 285--315.

\bibitem{matomaki}
K.~Matom{\"a}ki.
\newblock \textit{On signs of {F}ourier coefficients of cusp forms.}
\newblock Math. Proc. Cambridge Philos. Soc., \textbf{152} (2012), vol. 2, 207--222.

\bibitem{norton} Norton, Karl K. \textit{A character-sum estimate and applications}. Acta Arith. \textbf{85} (1998), no. 1, 51--78.

\bibitem{pollack} P. Pollack, \textit{The average least quadratic nonresidue modulo m and other variations on a theme of Erd\H{o}s}. J. Number Theory \textbf{132} (2012), 1185--1202

\bibitem{rajan}
C. S. Rajan, {\em Refinement of strong multiplicity one for automorphic representations of GL(n).} Proc. Amer. Math. Soc. \textbf{128} (2000), no. 3, 691--700.


\bibitem{Ramakrishnan-multiplictyone}
D.~Ramakrishnan.
\newblock \textit{A refinement of the strong multiplicity one theorem for {${\rm
  GL}(2)$}}. {A}ppendix to: \textit{{$l$}-adic representations associated to modular
  forms over imaginary quadratic fields. {II}} [{I}nvent.\ {M}ath.\ {\bf 116}
  (1994), no.\ 1-3, 619--643; {MR}1253207 (95h:11050a)] by {R}. {T}aylor.
\newblock Invent. Math., \textbf{116} (1994), no. 1-3, 645--649.

\bibitem{ramakrishnan-squares} D. Ramakrishnan, \textit{Recovering representation from squares}. Appendix to: W. Duke and E. Kowalski, \textit{A problem of Linnik for elliptic curves and mean-value estimates for automorphic representations}. Invent. Math. \textbf{139} (2000), no. 1, 1--39.

\bibitem{ribet}
K.~A. Ribet.
\newblock \textit{Galois representations attached to eigenforms with {N}ebentypus}.
\newblock Modular functions of one variable, {V} ({P}roc. {S}econd
  {I}nternat. {C}onf., {U}niv. {B}onn, {B}onn, 1976), 17--51. Lecture
  Notes in Math., Vol. 601. Springer, Berlin, 1977.

\bibitem{rubinstein-sarnak}
M. Rubinstein and P. Sarnak, {\em Chebyshev's bias}. Experiment. Math. \textbf{3} (1994), no. 3, 173--197.

\bibitem{shemanske-walling} T. Shemanske and L. Walling, {\em Twists of Hilbert modular forms}. Trans. Amer. Math. Soc. \textbf{338} (1993), 375--403.

\bibitem{shimura-book}
G.~Shimura, \textit{Introduction to the arithmetic theory
of automorphic functions}. Reprint
of the 1971 original.
Publications of the Mathematical Society of Japan, 11.
Kan{\^o} Memorial Lectures, 1.
Princeton University Press, Princeton, NJ, 1994.


\bibitem{weisinger-thesis} J. Weisinger, {\em Some Results on classical Eisenstein series and modular forms over function fields.} Thesis, Harvard Univ., 1977.

\bibitem{wood} M. M. Wood, \textit{On the probabilities of local behaviors in abelian field extensions.} Compos. Math. \textbf{146} (2010), no. 1, 102--128. 



















\end{thebibliography}
\end{document}